\font\gotxii=eufm10 at 12pt
\font\posebni=msam10
\font\tretamajka=cmbsy10 at 11pt
\newcommand{\e}{\varepsilon}
\newcommand{\f}{\varphi}
\newcommand\bS{\mathbf{S}}
\newcommand{\nor}[1]{|\hskip -0.6pt | #1 |\hskip -0.6pt |}
\newcommand\norm[2]{{\left\Vert{#1}\right\Vert_{#2}}}
\newcommand{\mn}[2]{\{ #1\, ;\, #2 \}}
\newcommand{\Mn}[2]{\left\{ #1\, ;\, #2 \right\}}
\newcommand{\sk}[2]{\left\langle #1 , #2\right\rangle}
\renewcommand{\div}[0]{{\rm div}}
\newcommand{\sign}{\mathop{\rm sign}\nolimits}
\newcommand{\cA}[0]{\mathcal A}
\newcommand{\cD}[0]{\mathcal D}
\newcommand{\cE}[0]{\mathcal E}
\newcommand{\cH}[0]{\mathcal H}
\newcommand{\cI}[0]{\mathcal I}
\newcommand{\cL}{\mathcal L}
\newcommand{\cO}[0]{\mathcal O}
\newcommand{\cQ}{\mathcal Q}
\newcommand{\cV}[0]{\mathcal V} 
\newcommand{\cW}[0]{\mathcal W} 
\newcommand{\bP}{{\mathbf P}}
\renewcommand{\a}[0]{\text{\gotxii a}}
\newcommand{\gota}[0]{\text{\gotxii a}}
\newcommand{\gotb}[0]{\text{\gotxii b}}
\newcommand{\leqsim}[0]{\,\text{\posebni \char46}\,}
\newcommand{\geqsim}[0]{\,\text{\posebni \char38}\,}
\newcommand{\oL}{{\mathscr L}}
\newcommand{\oN}{{\mathscr N}}
\newcommand{\oP}{{\mathscr P}}
\newcommand{\oV}{{\mathscr V}}
\newcommand{\oW}{{\mathscr W}}
\renewcommand{\geq}[0]{\geqslant}
\renewcommand{\leq}[0]{\leqslant}
\renewcommand{\Re}[0]{{\rm Re}\,}
\renewcommand{\Im}[0]{{\rm Im}\,}
\newcommand{\N}[0]{{\mathbb N}}
\newcommand{\R}[0]{\mathbb{R}}
\definecolor{mojaboja}{RGB}{130,60,40}
\definecolor{mojabojaa}{RGB}{100,200,100}
\newcommand{\pd}[0]{\partial}
\renewcommand{\theta}[0]{\vartheta}
\renewcommand\mod[1]{\left\vert{#1}\right\vert}
\newtheorem{thm}{Theorem}
\newtheorem{predefinition}[thm]{Definition}
\newenvironment{definition}
{\begin{predefinition}\rm}{\end{predefinition}}
\newtheorem{lemma}[thm]{Lemma}
\newtheorem{prop}[thm]{Proposition}
\newtheorem*{prop*}{Proposition}
\newtheorem{cor}[thm]{Corollary}
\newtheorem{preexample}[thm]{Example}  
\newtheorem{preremark}[thm]{Remark}  \newenvironment{remark}
{\begin{preremark}\rm}{\end{preremark}}
\newtheorem{prenotation}[thm]{Notation}  \newenvironment{notation}
{\begin{prenotation}\rm}{\end{prenotation}}
\numberwithin{equation}{section}
\numberwithin{thm}{section}
\begin{document}

\title[Bilinear embedding for Schr\"odinger-type operators]
{Bilinear embedding for Schr\"odinger-type operators with complex coefficients}


\author[Carbonaro]{Andrea Carbonaro}
\author[Dragi\v{c}evi\'c]{Oliver Dragi\v{c}evi\'c}

\address{Andrea Carbonaro\\ Universit\`a degli Studi di Genova\\ Dipartimento di Matematica\\ Via Dodecaneso\\ 35 16146 Genova\\ Italy}
\email{carbonaro@dima.unige.it}

\address{Oliver Dragičević, Department of Mathematics, Faculty of Mathematics and Physics, University of Ljubljana,  Jadranska ulica 19, SI-1000 Ljubljana, Slovenia, and Institute of Mathematics, Physics and Mechanics, Jadranska ulica 19, SI-1000 Ljubljana, Slovenia}
\email{oliver.dragicevic@fmf.uni-lj.si}

\begin{abstract}
We prove a variant of the so-called bilinear embedding theorem for operators in divergence form 
with complex coefficients and 
with nonnegative locally integrable potentials, 
subject to mixed boundary conditions, and 
acting on arbitrary open subsets of $\R^{d}$. 
\end{abstract}

\maketitle

\section{Introduction and the statement of the main result}
\label{s: intro}

Let $\Omega\subseteq\R^{d}$ be an arbitrary open set. 
Denote by $\cA(\Omega)$ the family of all complex {\it uniformly strictly accretive} (also called {\it elliptic}) $n\times n$ matrix functions on $\Omega$ with $L^{\infty}$ coefficients. 
That is, the set of all measurable $A:\Omega\rightarrow{\mathbb C}^{d\times d}$ for which 
there exist $\lambda,\Lambda>0$ such that for almost all $x\in\Omega$ we have
\begin{eqnarray}
\label{eq: ellipticity}
\Re\sk{A(x)\xi}{\xi}
&\hskip -19pt\geq \lambda|\xi|^2\,,
&\hskip 20pt\forall\xi\in{\mathbb C}^{d};
\\
\label{eq: boundedness}
\mod{\sk{A(x)\xi}{\eta}}
&\hskip-6pt\leq \Lambda \mod{\xi}\mod{\eta}\,,
&\hskip 20pt\forall\xi,\eta\in{\mathbb C}^{d}.
\end{eqnarray}
Elements of $\cA(\Omega)$ will also more simply be referred to as {\it accretive} 
or {\it elliptic matrices}.
For any $A\in\cA(\Omega)$ denote by
$\lambda(A)$
the largest admissible $\lambda$ in \eqref{eq: ellipticity} and by 
$\Lambda(A)$
the smallest $\Lambda$ in \eqref{eq: boundedness}.

Denote by $H^{1}_{0}(\Omega)$ the closure of $C_c^\infty(\Omega)$ in the Sobolev space $H^1(\Omega)=W^{1,2}(\Omega)$. 
Let $\mathscr{V}$ be a closed subspace of $H^{1}(\Omega)$ containing $H^{1}_{0}(\Omega)$, that is
\begin{equation}
\label{eq: Paradis Sicilienne}
H^{1}_{0}(\Omega)
\subseteq\oV
\subseteq
H^{1}(\Omega).
\end{equation}
Recall that $H^{1}_{0}(\R^{d})=H^{1}(\R^{d})$; see \cite[Corollary~3.19]{Adams} for a reference.

Furthermore, let $V\in L_{loc}^1(\Omega)$ be a nonnegative function. 
We define the operator, formally denoted by $Lu=-\div(A\nabla u)+Vu$, in the standard manner via  sesquilinear forms, see e.g. \cite[Sections 4.1 and 4.7]{O}.
Before proceeding we state 
that all the integrals in this paper will be taken with respect to the Lebesgue measure. As the ambient space we will always take $\cH=L^2(\Omega)$.

Let the form $\a=\a_{A,V}=\a_{A,V,\oV}$ be given by its domain
\begin{equation}
\label{eq: mongolska}
\aligned
\cD(\a)
&=\Mn{u\in\oV}{\int_\Omega V|u|^2<\infty}\\
\endaligned
\end{equation}
and, for $u,v\in\cD(\a)$, the formula
\begin{equation}
\label{eq: govedina}
\a(u,v):=\int_\Omega\left(\sk{A\nabla u}{\nabla v}_{{\mathbb C}^n} 
+ Vu\bar v\right).
\end{equation}

We define $L=L_{A,V}=L_{A,V,\oV}$ to be the 
unbounded, densely defined, closed operator on $L^2(\Omega)$, associated with $\a_{A,V}$. See \cite[Section 1.2.3]{O} for information about this construction. 
So we have
\begin{equation}
\label{eq: rakar230319}
\int_\Omega\sk{Lu}{v}_{{\mathbb C}^n}=
\int_\Omega\left(\sk{A\nabla u}{\nabla v}_{{\mathbb C}^n} +
Vu\bar v\right)
\hskip 20pt
\forall u\in\cD(L),
v\in\cD(\a).
\end{equation}
In accordance with \cite[1.8]{D} we call $L$ a {\it generalized Schr\"odinger operator} and $V$ its {\it potential}.

Given $\theta\in (0,\pi)$ define the {\it (open) sector of angle $\theta$} by
\begin{equation*}
\label{autobus karlovac-posedarje}
\bS_{\theta}=\{z\in{\mathbb C}\setminus\{0\}\,;\ |\arg z|<\theta\}.
\end{equation*}
Also set $\bS_{0}=(0,\infty)$. 
The basic properties of $\a$ are 
recalled
in the following result. 
Let $\nu(A)$ \label{Borodin Quartet} be the opening angle of the smallest sector 
whose closure 
contains the {\it numerical range} of $A$; see 
\cite[(2.9)]{CD-DivForm} for the definition. 
It follows from the definition of accretivity that 
$A\in\cA(\Omega)$ implies $0\leq\nu(A)\leq\arccos(\lambda/\Lambda) <\pi/2$. 
Other notions appearing in the next statement can be found in \cite{O}.

\begin{thm}
\label{t: majke cigan}
For every $\phi\in\R$ such that $|\phi|<\pi/2-\nu(A)$, the form $e^{i\phi}\a$ is densely defined, sectorial and closed.
\end{thm}
Sectorial forms are automatically accretive and continuous; see e.g. \cite[Proposition 1.8]{O}.
Therefore, by \cite[Proposition~1.27 and Theorem~1.54]{O}, the operator $-L$ generates on $L^2(\Omega)$ a strongly continuous semigroup of operators
$$
T_{t}=T^{A,V}_{t}=T^{A,V,\oV}_{t}=\exp(-tL),\quad t>0,
$$
which is holomorphic and contractive in a sector of positive angle. Hence $T_{t}$ maps  $L^{2}(\Omega)$ into $\cD(L)\subseteq\oV$ 
\cite[Theorem II.4.6]{EN}, 
and since $\oV\subseteq H^1(\Omega)$, 
the spatial gradient $\nabla  T_{t}f$ is well defined. By \cite[p. 72]{Stein}, 
given $f\in L^{2}(\Omega)$ we can redefine each $T_{t}f$ on a set of measure zero, in such a manner that for almost every $x\in\Omega$ the function $t\mapsto T_{t}f(x)$ is real-analytic on $(0,\infty)$.

\subsection{Special classes of boundary conditions}
\label{s: Karlstejn}

Here we describe certain classes of domains $\oV$ that, on top of  
\eqref{eq:  Paradis Sicilienne},
satisfy additional conditions which will be assumed throughout the rest of this paper.

\smallskip

We say that the space $\oV\subset H^{1}(\Omega)$ is {\it invariant} under: 
\begin{itemize}
\item
the function $p:{\mathbb C}\rightarrow{\mathbb C}$, if $u\in\oV$ implies $p(u):=p\circ u\in\oV$;
\item 
the family $\oP$ of functions ${\mathbb C}\rightarrow{\mathbb C}$, if it is invariant under all $p\in\oP$.
\end{itemize}

Define function $P:{\mathbb C}\rightarrow{\mathbb C}$ by 
\begin{equation}
\label{eq: P}
P(\zeta)=
\left\{
\begin{array}{lll}
\zeta & ; & |\zeta|\leq1\\
\zeta/|\zeta| & ; & |\zeta|\geq1.
\end{array}
\right.
\end{equation}
Thus $P(\zeta)=\min\{1,|\zeta|\}\sign\zeta$, where 
$\sign$ is defined as in \cite[(2.2)]{O}: 
$$
\sign\zeta:=\left\{
\begin{array}{ccl}
\zeta/|\zeta| & ; & \zeta\ne0\\
0 & ; & \zeta=0.
\end{array}
\right.
$$
Let $\mathscr{V}$ be a closed subspace of $H^{1}(\Omega)$ containing $H^{1}_{0}(\Omega)$ and such that
\begin{equation}
\label{eq: invariance}
\oV \text{ is invariant under the function } P.
\end{equation}
It is well known, see Ouhabaz \cite[Proposition 4.11]{O}, that \eqref{eq: invariance} is satisfied in these notable cases which will feature in our {\it bilinear embedding} (Theorem \ref{t: bilinemb}):
\begin{enumerate}[(a)]
\label{eq: Hyundai}
\item
$\oV=H^{1}_{0}(\Omega)$
\item
$\oV=H^{1}(\Omega)$
\item
$\oV$ is the closure in $H^{1}(\Omega)$ of $\Mn{u|_\Omega}{u\in C_c^\infty(\R^{d}\backslash\Gamma)}$, where $\Gamma$ is a (possibly empty) closed subset of $\pd\Omega$.
\end{enumerate}

When $\oV$ falls into any of the special cases (a)-(c) from
Section \ref{s: Karlstejn}, 
we say that $L=L_{A,V,\oV}$ is subject to (a) {\it Dirichlet}, (b) {\it Neumann} or (c) {\it mixed boundary conditions}.

\subsection{The $p$-ellipticity condition} 
\label{s: merlin}
The concept of $p$-ellipticity was introduced by the present authors in \cite{CD-DivForm} as follows.

Given $A\in\cA(\Omega)$ and $p\in (1,\infty)$, we say that $A$ is {\it $p$-elliptic} if
$\Delta_{p}(A)>0$, where
\begin{equation}
\label{eq: kabuto}
\Delta_{p}(A):=
\underset{x\in\Omega}{{\rm ess}\inf}
\min_{\substack{\xi\in{\mathbb C}^{d}\\ |\xi|=1}}
\Re\sk{A(x)\xi}{\xi+|1-2/p|\bar\xi}_{{\mathbb C}^{d}}.
\end{equation}
Equivalently, $A$ is $p$-elliptic if
there exists $C=C(A,p)>0$ such that for a.e. $x\in\Omega$,
\begin{equation}
\label{eq: Sparky 21}
\Re\sk{A(x)\xi}{\xi+|1-2/p|\bar\xi}_{{\mathbb C}^{d}}
\geq C |\xi|^2\,,
\hskip30pt
 \forall\xi\in{\mathbb C}^{d}.
\end{equation}
It follows straight from \eqref{eq: kabuto}
that $\Delta_{p}$ is invariant under conjugation of $p$, meaning that $\Delta_{p}(A)=\Delta_{q}(A)$ when $1/p+1/q=1$.
Furthermore, note that $\Delta_2(A)=\lambda(A)$, so $p$-ellipticity generalizes the notion of classical ellipticity.

Denote by $\cA_p(\Omega)$ the class of all $p$-elliptic matrix functions on $\Omega$. It is known, see \cite{CD-DivForm}, that 
$\mn{\cA_p(\Omega)}{p\in[2,\infty)}$
is a decreasing chain of matrix classes such that
$$
\begin{array}{rcc}
\{\text{elliptic matrices on }\Omega\} & = & \cA_2(\Omega)\\
\{\text{real elliptic matrices on }\Omega\} & = & 
{\displaystyle\bigcap_{p\in[2,\infty)}\cA_p(\Omega)}\,.
\end{array}
$$
Since we will be dealing with pairs of matrices, it is useful to introduce further notation, as in \cite{CD-DivForm, CD-Mixed}:
\begin{equation}
\aligned
\Delta_p(A,B)&=\min\{\Delta_p(A), \Delta_p(B)\}\\
\lambda(A,B)&=\min\{\lambda(A), \lambda(B)\}\\
\Lambda(A,B)&=\max\{\Lambda(A), \Lambda(B)\}.
\endaligned
\end{equation}

While the present authors were preparing \cite{CD-DivForm}, M. Dindo\v{s} and J. Pipher
were working on their own article \cite{DiPi}.
They found a sharp condition, see \cite[(1.3)]{DiPi}, which implies reverse H\"older inequalities for weak solutions of elliptic operators in divergence form with complex coefficients. It turned out that the condition of theirs, devised
independently of \cite{CD-DivForm}, namely, as a strengthening of \cite[(2.25)]{CM}, was 
exactly equivalent to \eqref{eq: Sparky 21}.
The same authors have since then been successfully continuing their line of exploration of $p$-ellipticity in PDEs; see their recent papers \cite{DiPi20, DiLiPi20}.

\smallskip
The notion of $p$-ellipticity emerged in \cite{CD-DivForm} after several years of gradually distilling the {\it Bellman-function-heat-flow-method} (see Section \ref{s: Heat flow}), initiated in \cite{PV, NV},  through \cite{DV-Kato, DV-Sch, DV, CD-mult, CD-OU, CD-Riesz}.
More information about the genesis of $p$-ellipticity can be found in \cite{CD-DivForm, Trilinear}.

\subsection{Semigroup properties on $L^p$}

The first result of ours is Theorem \ref{t: wabaus}. It generalizes the implication ${\mathit(a)}\Rightarrow{\mathit(b)}$ of \cite[Theorem 1.3]{CD-DivForm}. See also \cite[Proposition 1]{CD-Mixed}, where it was proven in special cases (a)-(c) from Section \ref{s: Karlstejn}, and $\phi=0$, $V=0$.  
The proof of Theorem \ref{t: wabaus} is a modification of the one from \cite{CD-DivForm}, the main difference being that instead of \cite[Theorem 4.7]{O} we now use a more 
general result
\cite[Theorem 4.31]{O}. In all of those cases, we build on a criterion by Nittka (Theorem \ref{t: Nittka theorem}).
Assuming again that $\phi=0$, $V=0$ and $\oV$ is one of the special cases (a)-(c) from Section \ref{s: Karlstejn}, a proof of Theorem \ref{t: wabaus} different from the one above, yet still resting on Nittka's theorem, was recently found by Egert \cite[Proposition 13]{Egert2018}. 
Compare also with theorems by ter Elst et al. \cite{tELSV, Elst2019}.

\begin{thm}
\label{t: wabaus}
Choose 
$p>1$, 
$A\in\cA(\Omega)$, 
$\phi\in\R$ such that $|\phi|<\pi/2-
\nu(A)$
and $\Delta_p(e^{i\phi}A)\geq0$, 
and a nonnegative $V\in L_{\rm loc}^1(\Omega)$. 
Then,
for every $\oV$ satisfying \eqref{eq: invariance}, 
$$
\big(e^{-te^{i\phi}L_{A,V,\oV}}\big)_{t>0}
$$
extends to a contractive semigroup on $L^p(\Omega)$. 
\end{thm}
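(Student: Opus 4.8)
The plan is to reduce the $L^p$-contractivity of the analytic semigroup generated by $-e^{i\phi}L^A_V$ to a consistency/extrapolation statement for the semigroup on $L^2\cap L^p$, which in turn follows from an invariance criterion of Nittka/Ouhabaz type applied to the form $e^{i\phi}\gota^A_V$. The key analytic ingredient is that $p$-ellipticity of $e^{i\phi}A$ is exactly the algebraic condition that makes the truncation map $P$ from \eqref{eq: P} ``form-contractive'' for $e^{i\phi}\gota^A_V$, so that the associated semigroup maps the unit ball of $L^\infty$ (intersected with $L^2$) into itself, and then by duality and interpolation one gets contractivity on every $L^q$ between the relevant exponents.

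First I would recall the abstract mechanism. Fix $\psi$ with $|\psi|<\pi/2-\omega_0(A)$; since $e^{i\phi}\gota^A_V$ is sectorial of angle $<\pi/2$ (as noted in the excerpt, because $|\phi|<\pi/2-\omega_0$), for $|\psi|$ small the rotated form $e^{i\psi}\cdot(e^{i\phi}\gota^A_V)=e^{i(\phi+\psi)}\gota^A_V$ is still densely defined, closed and sectorial, and $-e^{i\psi}e^{i\phi}L^A_V$ generates a contraction semigroup on $L^2(\Omega)$. The strategy is to show this $L^2$-semigroup leaves invariant the closed convex set $\mathcal C=\{u\in L^2(\Omega):\|u\|_\infty\le1\}$. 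By the Beurling--Deny/Ouhabaz criterion (\cite[Theorem 4.31]{O}), invariance of $\mathcal C$ is equivalent to: $P(u)\in\Dom(e^{i\phi}\gota^A_V)$ whenever $u\in\Dom(e^{i\phi}\gota^A_V)$ — which is guaranteed by the standing assumption \eqref{eq: invariance} that $\oV$ is invariant under $P$, together with the fact that $P$ does not increase the $\Dom(V^{1/2})$-norm — and the inequality
$$
\Re\Big(e^{i(\phi+\psi)}\gota^A_V\big(P(u),u-P(u)\big)\Big)\ge 0
$$
for all such $u$. Expanding $u-P(u)=(1-\min\{1,|u|\}/|u|)u$ on the set $\{|u|>1\}$ and using the chain rule for $\nabla P(u)$, this inequality becomes, after the usual polarization, a pointwise condition on the matrix $e^{i(\phi+\psi)}A$ of precisely the form \eqref{eq: Sparky 21} — this is the computation carried out in \cite{CD-DivForm} for the Dirichlet case, and it goes through verbatim here because the potential term contributes $\int V\,(\text{something})\ge0$ and because only the gradient part interacts with $A$. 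The point where $p$-ellipticity enters: the admissible range of directions produced by the truncation $P$ corresponds exactly to the vector $\xi+|1-2/p|\bar\xi$, and $\Delta_p(e^{i\phi}A)\ge0$ (extended to a neighborhood, using continuity of $\psi\mapsto\Delta_p(e^{i(\phi+\psi)}A)$ and a limiting argument to handle the degenerate case $\Delta_p=0$) yields the nonnegativity above. Hence $\mathcal C$ is invariant, so $\|T^{A,V}_z f\|_\infty\le\|f\|_\infty$ for $f\in L^2\cap L^\infty$ and $z$ in the relevant sector; by $T^{A,V}_z 1$-type duality (the adjoint semigroup is generated by $L^{A^*}_V$, and $\Delta_p(e^{i\phi}A)=\Delta_q(e^{-i\phi}A^*)$ with $1/p+1/q=1$ by the conjugation-invariance noted after \eqref{eq: kabuto}) one gets the dual $L^1$-type bound, and Riesz--Thorin interpolation between the $L^2$-contractivity and these bounds gives contractivity on $L^p(\Omega)$ for the claimed $p$. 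Finally, density of $L^2\cap L^p$ in $L^p$ lets the semigroup extend by continuity, and strong continuity on $L^p$ follows from strong continuity on $L^2$ plus uniform boundedness via a standard $3\varepsilon$ argument.

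The main obstacle I expect is handling the boundary condition encoded by $\oV$ together with the potential $V$ in the verification of the Ouhabaz invariance criterion: one must check that $P(u)$ stays in $\Dom(\gota^A_V)=\oV\cap\Dom(V^{1/2})$, which needs \eqref{eq: invariance} for the $\oV$-part (this is why that hypothesis is imposed) and the elementary bound $|P(u)|\le|u|$ together with $V\ge0$ for the $\Dom(V^{1/2})$-part, and then that the cross-term $\gota^A_V(P(u),u-P(u))$ has nonnegative real part after rotation by $e^{i\psi}$ — the rotation is what forces the interplay between the sectoriality angle $\omega_0(A)$, the angle $\phi$, and the exponent $p$, and keeping track of all three simultaneously (rather than just $\phi=0$, $V=0$ as in \cite{CD-Mixed}) is the real bookkeeping burden; the degenerate boundary case $\Delta_p(e^{i\phi}A)=0$, allowed in the hypothesis, requires an approximation replacing $A$ by $A+\delta I$ or perturbing $\phi$ slightly and passing to the limit in the resolvents, which is routine but must be stated. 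Modulo this, the proof is the one from \cite{CD-DivForm} with \cite[Theorem 4.7]{O} replaced by \cite[Theorem 4.31]{O} as the excerpt indicates.
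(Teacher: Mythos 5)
Your proposal contains a genuine gap, and the issue is at the very center of the argument: you try to prove $L^\infty$-contractivity of the semigroup $T^{A,V}_z$ itself (invariance of the closed convex set $\mathcal C = \{u \in L^2 : \|u\|_\infty \le 1\}$ via the Ouhabaz criterion) and then obtain $L^p$-contractivity by duality and interpolation. But $p$-ellipticity of $e^{i\phi}A$ for a \emph{finite} $p$ does not imply $L^\infty$-contractivity of the generated semigroup, and it cannot: if $T^{A,V}_t$ were $L^\infty$-contractive (hence, by duality, $L^1$-contractive) and $L^2$-contractive, Riesz--Thorin would give $L^r$-contractivity for \emph{all} $r\in[1,\infty]$, which is incompatible with the fact (recorded in Section \ref{s: merlin}) that $\{\cA_r(\Omega)\}_{r\ge2}$ is a strictly decreasing chain whose intersection is the class of real elliptic matrices. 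The pointwise matrix inequality that makes the $L^\infty$-truncation $P$ form-contractive for a complex form is an essentially $p=\infty$ condition, not \eqref{eq: Sparky 21} with the given finite $p$; your claim that ``the admissible range of directions produced by the truncation $P$ corresponds exactly to the vector $\xi+|1-2/p|\bar\xi$'' is precisely where the argument breaks, since the $L^\infty$-projection $P$ does not depend on $p$ at all.

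The paper's proof avoids this by invoking Nittka's criterion (Theorem \ref{t: Nittka theorem}) at the level of the target exponent $p$ directly: $L^p$-contractivity is equivalent to (i) invariance of the form domain under $\bP_{B^p}$, the projection onto the unit ball of $L^p$ --- not $L^\infty$ --- together with (ii) $L^p$-dissipativity of the form. Item (i) is obtained for free by noticing that $\Dom(e^{i\phi}\gota^{A}_V)=\Dom(\gota_{I,V})$, and the auxiliary \emph{real}-coefficient Schr\"odinger semigroup $e^{-t(-\Delta+V)}$ is $L^\infty$-contractive by \cite[Theorem 4.31 {\it 2)}]{O} and assumption \eqref{eq: invariance}, hence $L^p$-contractive by interpolation; running Nittka's theorem backwards for $I$ yields the required domain invariance, which then transfers to $e^{i\phi}A$ because the domains coincide. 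Item (ii) is where $p$-ellipticity enters, via \eqref{eq: knjiga} with $B=e^{i\phi}A$: $\Re\gotb(u,|u|^{p-2}u)\ge \frac p2\Delta_p(e^{i\phi}A)\int|u|^{p-2}|\nabla u|^2 + \cos\phi\int V|u|^p\ge 0$, where $\Delta_p\ge0$ is used exactly as stated (no rotation by $\psi$, no regularization $A+\delta I$, no limiting argument to handle $\Delta_p=0$ --- the inequality already accommodates equality). So the fix is to abandon the $L^\infty$-ball and the interpolation step for $T^{A,V}$, verify $L^p$-dissipativity of $e^{i\phi}\gota^A_V$ by the chain-rule identity \eqref{eq: bazilika}/\eqref{eq: knjiga}, and reserve the $L^\infty$-to-$L^p$ interpolation argument for the real comparison form $\gota_{I,V}$, whose sole purpose is to supply the $\bP_{B^p}$-invariance of the common form domain.
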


The next corollary extends \cite[Lemma 17]{CD-Mixed}.

\begin{cor}
\label{c: Marof}
Choose $p>1$, $A\in\cA_p(\Omega)$ and a nonnegative $V\in L_{\rm loc}^1(\Omega)$. 
Let $\oV$ satisfy \eqref{eq: invariance}.
Then there exists $\theta=\theta(p,A)>0$ such that if $|1-2/r|\leq|1-2/p|$ then 
$\Mn{T_z^{A,V}}{z\in\bS_{\theta}}$ is holomorphic and contractive in $L^r(\Omega)$.
\end{cor}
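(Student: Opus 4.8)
The plan is to deduce Corollary~\ref{c: Marof} from Theorem~\ref{t: wabaus} by combining the $L^p$-contractivity of the rotated semigroups with an interpolation/analyticity argument, exactly in the spirit of \cite[Lemma 17]{CD-Mixed}. First I would observe that the set of angles $\phi$ for which Theorem~\ref{t: wabaus} applies is an \emph{open} condition: since $A\in\cA_p(\Omega)$ means $\Delta_p(A)>0$, and since the map $\phi\mapsto\Delta_p(e^{i\phi}A)$ is continuous (indeed the integrand in \eqref{eq: kabuto} depends continuously on $\phi$, uniformly in $\xi$ on the unit sphere) with value $\Delta_p(A)>0$ at $\phi=0$, there is some $\phi_0=\phi_0(p,A)>0$, which we may also take smaller than $\pi/2-\omega_0(A)$, such that $\Delta_p(e^{i\phi}A)\geq 0$ for all $|\phi|\leq\phi_0$. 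Hence by Theorem~\ref{t: wabaus}, for every such $\phi$ the semigroup $\big(e^{-te^{i\phi}L^A_V}\big)_{t>0}$ is contractive on $L^p(\Omega)$; rewriting $te^{i\phi}=z$, this says precisely that $T^{A,V}_z$ is contractive on $L^p(\Omega)$ for all $z\in\bS_{\phi_0}$.

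Next I would upgrade contractivity on the ray to contractivity on the sector, and then to analyticity, on $L^p$. Contractivity of $T^{A,V}_z$ for all $z\in\bS_{\phi_0}$ is already the full sector statement; what remains is analyticity of $z\mapsto T^{A,V}_z$ as an $L^p(\Omega)$-valued map on $\bS_{\phi_0}$ (or on a slightly smaller sector). This is a standard consequence of the uniform boundedness on the sector together with analyticity on $L^2(\Omega)$ and the density of $L^2\cap L^p$ in $L^p$: the family $\{T^{A,V}_z : z\in\bS_{\theta}\}$ is locally uniformly bounded on $L^p$ for any $\theta<\phi_0$, agrees with the $L^2$-analytic family on the dense subspace $L^2\cap L^p$, and hence extends to a locally bounded analytic $L^p$-valued family by a Vitali-type / Montel-type argument (this is essentially the content of \cite[Theorem~3.4.8]{ABHN}-style results, or can be cited from \cite{CD-DivForm, CD-Mixed}). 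One then picks $\theta=\theta(p,A)$ to be, say, $\phi_0/2$.

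Finally I would handle the exponent $r$ with $|1-2/r|\leq|1-2/p|$. For such $r$ one has $\Delta_r(A)\geq\Delta_p(A)>0$ by monotonicity of $\Delta_p$ in $|1-2/p|$ (recorded in Section~\ref{s: merlin}: $\cA_p(\Omega)$ is a decreasing chain), hence $A\in\cA_r(\Omega)$ as well, and the same argument gives a sector $\bS_{\theta_r}$ of analyticity and contractivity on $L^r$. But one wants a single $\theta$ working for all such $r$ \emph{simultaneously}. The cleanest route is interpolation: contractivity and analyticity on $L^2$ and on $L^p$ (and, by conjugation-invariance, on $L^{p'}$) pass to every $L^r$ with $r$ between $p'$ and $p$, i.e. $|1-2/r|\leq|1-2/p|$, by Riesz--Thorin/Stein interpolation applied to the operators $T^{A,V}_z$ for $z$ in a fixed subsector, with $\theta$ then depending only on $p$ and $A$. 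I expect the main technical point — though it is routine given the tools already in the literature — to be making the interpolation of the \emph{analytic family} rigorous (Stein interpolation with the sector as parameter, ensuring the $\theta$ obtained is uniform in $r$); everything else is a direct bookkeeping consequence of Theorem~\ref{t: wabaus} and the openness of the $p$-ellipticity condition under small rotations.
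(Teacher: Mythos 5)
Your proposal is correct in its first two steps (openness of the $p$-ellipticity condition under small rotations via continuity of $\phi\mapsto\Delta_p(e^{i\phi}A)$, then sector contractivity on $L^p$ from Theorem~\ref{t: wabaus}, then $L^p$-analyticity by a standard argument), and these coincide with the paper's argument. Where you diverge is in handling the intermediate exponents $r$ with $|1-2/r|\leq|1-2/p|$: you invoke Riesz--Thorin/Stein interpolation between $L^2$ and $L^p$ (and $L^{p'}$ by duality) to propagate contractivity of $T^{A,V}_z$, worrying explicitly about obtaining a uniform $\theta$. The paper does something simpler: it observes that the monotonicity you already cited, namely that $\Delta_r\geq\Delta_p$ when $|1-2/r|\leq|1-2/p|$, applies verbatim to the rotated matrices $e^{i\e}A$, so $\Delta_r(e^{i\e}A)\geq\Delta_p(e^{i\e}A)>0$ holds for \emph{every} $\e\in[-\theta,\theta]$ and every admissible $r$ simultaneously; the very same $\theta$ then works uniformly in $r$, and Theorem~\ref{t: wabaus} is applied directly to each $r$ with no interpolation step at all (the relevant monotonicity/symmetry facts are \cite[Corollary 5.16, Proposition 5.8]{CD-DivForm}, and continuity in $\e$ is from \cite[Section 5.4]{CD-DivForm}). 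Both routes yield the statement, but yours imports an extra interpolation layer and the associated rigor concerns about analytic families, whereas the paper's route sidesteps all of that by noting that uniformity in $r$ is already built into the $p$-ellipticity monotonicity. For analyticity the paper cites \cite[Theorem II.4.6]{EN}; your Vitali/Montel route is a valid alternative.
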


The proofs of these results will be given in Section \ref{s: Veco Holjevac}.

\subsection{Main result: the bilinear embedding theorem for pairs of complex $p$-elliptic operators with mixed boundary conditions}

In this section we assume boundary conditions that are less general than those from our contractivity result (Theorem \ref{t: wabaus}). Namely, we take pairs $\oV,\oW$ which are of the form (a)-(c) from Section \ref{s: Karlstejn}. 
We needed this restriction in order to tackle technical issues which arise in the proof of this note's main result, the {\it dimension--free bilinear embedding theorem} which we formulate next. 

\begin{thm}
\label{t: bilinemb}
Choose $p>1$. Let $q$ be its conjugate exponent, i.e., $1/p+1/q=1$. Suppose that $A,B\in\cA_p(\Omega)>0$. Let $V,W\in L_{\rm loc}^1(\Omega)$ be nonnegative. Assume that the operators $L_{A,V,\oV}$ and $L_{B,W,\oW}$ are subject to Dirichlet, Neumann or mixed boundary conditions, cf. {\rm (a)-(c)} from Section \ref{s: Karlstejn}.

There exists $C>0$ such that for any $f,g\in (L^p\cap L^q)(\Omega)$ we have
$$
\int_0^\infty\int_{\Omega}
\sqrt{\big|\nabla T^{A,V,\oV}_{t}f\big|^2+V\big|T^{A,V,\oV}_{t}f\big|^2}
\sqrt{\big|\nabla T^{B,W,\oW}_{t}g\big|^2+W\big|T^{B,W,\oW}_{t}g\big|^2}
\leq C\nor{f}_p\nor{g}_q.
$$
We may choose $C>0$ which depends on 
$p,A,B$,
but not on the dimension $d$ nor the potentials $V,W$.
\end{thm}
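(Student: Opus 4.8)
\textbf{Proof strategy for Theorem \ref{t: bilinemb}.}

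The plan is to run the heat-flow / Bellman-function method in the form developed in \cite{CD-DivForm}, adapted to the potential term and to the three admissible boundary conditions. The starting point is the generalized Nazarov--Treil--Volberg Bellman function $Q=Q_p$ on $\C\times\C$, which is comparable to $|\zeta|^p+|\eta|^q$ and satisfies a quantitative convexity estimate: for a suitable $\delta_p>0$ depending only on $p$ and on $\Delta_p(A,B)$, the (real) Hessian of $Q$ controls the bilinear expression $|\xi||\eta|$ from below, in the precise sense that for all $\omega_1,\omega_2\in\C^d$,
$$
-\,\Re\sk{\nabla^2 Q(\zeta,\eta)\,(\omega_1,\omega_2)}{(A\omega_1, B\omega_2)}
\;\geq\; \delta_p\,|\omega_1||\omega_2|,
$$
together with the natural bound $|\nabla^2 Q|\lesssim |\zeta|^{p-2}+|\eta|^{q-2}$. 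The $p$-ellipticity of $A$ and $B$ is exactly what makes this ``embedding-type'' Hessian inequality available (this is the content of the generalized convexity lemmas in \cite{CD-DivForm}); the potentials enter only through the extra nonnegative diagonal terms $V\partial_{\zeta\bar\zeta}Q\,|u|^2$ and $W\partial_{\eta\bar\eta}Q\,|v|^2$, which have the right sign because $Q$ is plurisubharmonic in each variable separately.

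With $Q$ in hand, I would set $u(t,x)=T^{A,V,\oV}_{t}f(x)$, $v(t,x)=T^{B,W,\oW}_{t}g(x)$, and study the ``Bellman heat flow''
$$
\mathcal{E}(t)=\int_{\Omega} Q\big(u(t,x),v(t,x)\big)\wrt x.
$$
Differentiating in $t$, using that $\pd_t u=-L^{A}_{V}u$ and $\pd_t v=-L^{B}_{W}v$, integrating by parts via the forms $\gota^{A}_{V}$ and $\gota^{B}_{W}$ (which is legitimate because $T_t f\in\Dom(L^A_V)\subseteq\oV\cap\Dom(V^{1/2})$ for $t>0$, with analytic $t$-dependence), and applying the Hessian inequality above, one obtains
$$
-\mathcal{E}'(t)\;\geq\;\delta_p\int_{\Omega}
\sqrt{|\nabla u|^2+V|u|^2}\,\sqrt{|\nabla v|^2+W|v|^2}\wrt x.
$$
Integrating over $t\in(0,\infty)$ and using $0\leq \mathcal{E}(t)\lesssim \|u(t)\|_p^p+\|v(t)\|_q^q$ together with $\mathcal{E}(t)\to 0$ as $t\to\infty$ (for $C_c^\infty$ data, by decay of the semigroups on $L^p$ and $L^q$), we get
$$
\delta_p\int_0^\infty\!\!\int_{\Omega}
\sqrt{|\nabla u|^2+V|u|^2}\,\sqrt{|\nabla v|^2+W|v|^2}
\;\leq\;\mathcal{E}(0)\;\lesssim\;\|f\|_p^p+\|g\|_q^q,
$$
and a standard homogenization (replace $f,g$ by $\tau f,\tau^{-1}g$ and optimize in $\tau>0$, using the $p$-homogeneity of each side under this scaling) upgrades the right-hand side to $C\|f\|_p\|g\|_q$. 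Here Theorem \ref{t: wabaus} and Corollary \ref{c: Marof} are used to guarantee that the two semigroups act contractively (and analytically) on the relevant $L^r$ spaces, so all the integrals and the differentiation under the integral sign are justified, and $\mathcal{E}$ is finite and decaying.

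The main obstacle is making the formal integration by parts and the time-differentiation rigorous on an \emph{arbitrary} open set $\Omega$ with only $V\in L^1_{\mathrm{loc}}$ and with non-smooth accretive coefficients --- this is exactly why the statement is restricted to boundary conditions of type (a)--(c). One must (i) justify $\mathcal{E}\in C^1(0,\infty)$ and the identity $\mathcal{E}'(t)=-2\Re\gota^{A}_{V}(u,Q_\zeta(u,v))-2\Re\gota^{B}_{W}(v,Q_\eta(u,v))$, which requires knowing that $Q_\zeta(u,v)$, $Q_\eta(u,v)$ lie in the form domains $\oV\cap\Dom(V^{1/2})$, $\oW\cap\Dom(W^{1/2})$ respectively; the containment in $\oV$, $\oW$ is where the invariance of these spaces under Lipschitz-type nonlinearities (a consequence of invariance under $P$, cf.\ \eqref{eq: invariance} and cases (a)--(c)) together with an approximation argument is essential, and where the Dindo\v{s}--Pipher-type reformulation of $p$-ellipticity and Nittka's theorem feed in. I would handle the low regularity of $V$ by first truncating $V$ to $V_n=\min(V,n)$, proving the embedding with a constant independent of $n$, and passing to the limit by monotone convergence; similarly, an exhaustion of $\Omega$ by nice subdomains, or a regularization of $A,B$, may be needed to legitimize the pointwise chain rule for $Q(u,v)$ before removing the regularization. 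Establishing the endpoint behavior $\mathcal{E}(t)\to0$ as $t\to\infty$ and the absolute convergence of the double integral near $t=0$ (using $\|\nabla T_t f\|_2\lesssim t^{-1/2}\|f\|_2$ and the analogous bound with the potential) are the remaining technical points, but they are routine once the contractivity/analyticity package of Theorem \ref{t: wabaus} and Corollary \ref{c: Marof} is available.
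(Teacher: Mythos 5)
Your outline reproduces the heat-flow/Bellman template of \cite{CD-DivForm,CD-Mixed} correctly, and the truncation $V_n=\min(V,n)$ is indeed how the paper reduces to bounded potentials. However, there is a genuine gap at the very step that makes the potential case different from the $V=W=0$ case.

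The potential contributes to $-\mathcal{E}'(t)$ through \emph{first-order} terms in $\cQ$, namely
$$
I_2 \;=\; 2\int_\Omega\Big[\,V\,(\partial_\zeta\cQ)(v,w)\cdot v \;+\; W\,(\partial_\eta\cQ)(v,w)\cdot w\,\Big],
$$
not through the diagonal second derivatives $\partial_{\zeta\bar\zeta}\cQ,\partial_{\eta\bar\eta}\cQ$ as you wrote. More importantly, the nonnegativity of $I_2$ (plurisubharmonicity of $\cQ$, in your language) would only permit you to \emph{drop} the potential terms, which yields the weaker inequality
$
\int_0^\infty\int_\Omega |\nabla T^A_t f|\,|\nabla T^B_t g|\lesssim\|f\|_p\|g\|_q ,
$
i.e.\ the known $V=W=0$ result, not the stated Theorem~\ref{t: bilinemb} with $\sqrt{|\nabla u|^2+V|u|^2}$ under the square roots. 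To absorb $V|u|^2$ and $W|v|^2$ into the product of square roots you need the \emph{quantitative} lower bounds
$$
(\partial_\zeta\cQ)(\zeta,\eta)\cdot\zeta \;\geqsim\; \tau\,|\zeta|^2,
\qquad
(\partial_\eta\cQ)(\zeta,\eta)\cdot\eta \;\geqsim\; \tau^{-1}\,|\eta|^2,
$$
with the \emph{same} weight function $\tau=\tau(\zeta,\eta)$ that appears in the Hessian estimate $H_\cQ^{(A,B)}[\cdot;\omega]\geqsim\tau|\omega_1|^2+\tau^{-1}|\omega_2|^2$. Only with matching weights do the Hessian terms (carrying $\tau|\nabla u|^2,\tau^{-1}|\nabla v|^2$) and the first-order potential terms (carrying $\tau V|u|^2,\tau^{-1}W|v|^2$) merge, via $\alpha a+\alpha^{-1}b\geq 2\sqrt{ab}$, into $\sqrt{|\nabla u|^2+V|u|^2}\sqrt{|\nabla v|^2+W|v|^2}$. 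Proving these gradient estimates for the Nazarov--Treil function, with the correct $\tau$, is exactly the new content of Theorem~\ref{t: trica} over its $V=0$ antecedent \cite[Theorem 5.2]{CD-DivForm}, and your writeup does not supply it; as stated, your claimed lower bound on $-\mathcal E'(t)$ does not follow from the Hessian inequality plus mere nonnegativity of the potential terms.

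Two smaller remarks. For passing from bounded to general $V\in L^1_{\rm loc}$, naked monotone convergence is not enough: one must show that $\nabla T^{A,V_n}_t f\to\nabla T^{A,V}_t f$ and $V_n^{1/2}T^{A,V_n}_t f\to V^{1/2}T^{A,V}_t f$ in $L^2$, which the paper obtains through resolvent convergence (Ouhabaz's monotone form convergence plus a Vitali/Cauchy-integral argument), so this step has real content you have glossed over. And the suggested alternatives to the mollification of $\cQ$ (exhaustion of $\Omega$, regularization of $A,B$) are not what permits the integration by parts on an arbitrary open set; the actual mechanism is the approximation scheme for $\cQ$ from \cite[Theorem 16]{CD-Mixed}, which is why the restriction to boundary conditions (a)--(c) enters precisely in the estimate of the divergence term $I_1$.
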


This result incorporates several earlier theorems as special cases, including:
\begin{itemize}
\item
$V=W$, $\Omega=\R^{d}$, $A=B$ equal and real \cite[Theorem 1]{DV-Kato}

\item
$V=W=0$, $\Omega=\R^{d}$ \cite[Theorem 1.1]{CD-DivForm}

\item
$V=W=0$ \cite[Theorem 2]{CD-Mixed}.
\end{itemize}
See also \cite[Theorem 1]{DV-Sch} for a variant with $A=B=I$, $V=W$, $\Omega=\R^{d}$ and involving the semigroup generated by the {\sl square root} of the operator $L$. 
This variant also bore consequences in the shape of dimension-free $L^p$ estimates of Riesz transforms associated with the harmonic oscillator (Hermite operator), which are optimal with respect to $p$; see \cite[Corollary 1]{DV-Sch}. 
The difficulty that we encounter in this paper is the generality of the setting in terms of domains ($\Omega$), matrices ($A,B$) and potentials ($V,W$), which requires significantly more effort in order to complete the proof.

Various types of bilinear embeddings have been proven in the last 20 years, often admitting important consequences, such as Riesz transform estimates and optimal holomorphic functional calculus. The present authors' efforts aimed at proving bilinear embedding as in \cite[Theorem 1.1]{CD-DivForm} eventually gave rise to the concept of $p$-ellipticity summarized in Section \ref{s: merlin}.
See \cite{Trilinear} and the above references for more historical background and motivation. We finally remark that  $p$-ellipticity is the sharp condition for {\sl dimension-free} bilinear embeddings; see \cite[Section 1.4]{CD-DivForm} for a precise statement.

\section{Proof of Theorem \ref{t: wabaus}}
\label{s: Veco Holjevac}

We first recall the notion of {\it $L^{p}$-dissipativity} of sesquilinear forms. It was introduced by Cialdea and Maz'ya in \cite[Definition~1]{CM} for the case of forms defined on $C_c^1(\Omega)$ and associated with complex matrices. In \cite[Definition 7.1]{CD-DivForm} the present authors extended their definition as follows.

\begin{definition}
\label{d: linna}
Let $X$ be a measure space, $\gotb$ a sesquilinear form defined on the domain $\cD(\gotb)\subset L^2(X)$ and $1<p<\infty$. Denote 
$$
\cD_{p}(\gotb):=\mn{u\in\cD(\gotb)}{|u|^{p-2}u\in\cD(\gotb)}. 
$$
We say that $\gotb$ is {\it $L^{p}$-dissipative} if 
$$
\Re\gotb\left(u,|u|^{p-2}u\right)\geq0
\hskip40pt
\forall\,u\in\cD_{p}(\gotb).
$$
\end{definition}

The following theorem is due to Nittka \cite[Theorem 4.1]{Nittka}. 
We remark that Nittka formulated his result for sectorial forms, but it seems that sectoriality is not needed for our version of his result, since it is not needed for Ouhabaz's criterion \cite[Theorem 2.2]{O} on which Nittka's own criterion is based. Of course, the forms we are dealing with in this paper are all sectorial anyway.

\begin{thm}
\label{t: Nittka theorem} 
Let $(\Omega,\mu)$ be a measure space. 
Suppose that the sesquilinear form $\a$ on $L^2=L^2(\Omega,\mu)$ is densely defined, accretive, 
continuous and closed. Let $\cL$ be the operator associated with $\a$ in the sense of \cite[Section 1.2.3]{O}.

Take $p\in(1,\infty)$ and define $B^p:=\mn{u\in L^2\cap L^p}{\nor{u}_p\leq 1}$. Let $\bP_{B^p}$ be the orthogonal projection $L^2\rightarrow B^p$. 
Then the following assertions are equivalent: 
\begin{itemize}
\item
\label{eq: goro}
$\nor{\exp(-t\cL)f}_p\leq\nor{f}_p$ for all $f\in L^2\cap L^p$ and all $t\geq0$;
\item
\label{eq: masamune}
$\cD(\a)$ is invariant under $\bP_{B^p}$ and 
$\text{\gotxii a}$ is {\it $L^p$-dissipative}.
\end{itemize}
\end{thm}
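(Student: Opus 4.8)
The statement to be proven is Nittka's criterion (Theorem~\ref{t: Nittka theorem}): for a densely defined, accretive, continuous, closed sesquilinear form $\a$ on $L^2$ with associated operator $\cL$, the $L^p$-contractivity of $(\exp(-t\cL))_{t\geq0}$ is equivalent to the invariance of $\Dom(\a)$ under the projection $\bP_{B^p}$ onto the convex set $B^p=\mn{u\in L^2\cap L^p}{\nor{u}_p\leq1}$ together with the $L^p$-dissipativity of $\a$. The strategy is to reduce this to Ouhabaz's invariance criterion \cite[Theorem~2.2]{O}, which characterizes, for a closed convex set $\cC\subset L^2$ with projection $\bP_\cC$, when the semigroup $(\exp(-t\cL))_{t\geq0}$ leaves $\cC$ invariant: namely, iff $\bP_\cC(\Dom(\a))\subset\Dom(\a)$ and $\Re\a(\bP_\cC u, u-\bP_\cC u)\geq0$ for all $u\in\Dom(\a)$. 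The first observation is that $(\exp(-t\cL))_{t\geq0}$ is $L^p$-contractive on $L^2\cap L^p$ precisely when it maps $B^p$ into itself: indeed $B^p$ is the closed unit ball of $L^p$ intersected with $L^2$, it is convex and closed in $L^2$, and by homogeneity contractivity on $L^2\cap L^p$ is the same as invariance of $B^p$. So applying Ouhabaz's theorem with $\cC=B^p$ already gives: $L^p$-contractivity $\iff$ $\bP_{B^p}(\Dom(\a))\subset\Dom(\a)$ and $\Re\a(\bP_{B^p}u,u-\bP_{B^p}u)\geq0$ for all $u\in\Dom(\a)$.

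It therefore remains to show that, \emph{under the standing assumption that $\Dom(\a)$ is invariant under $\bP_{B^p}$}, the condition $\Re\a(\bP_{B^p}u,u-\bP_{B^p}u)\geq0$ for all $u\in\Dom(\a)$ is equivalent to the $L^p$-dissipativity condition $\Re\a(v,|v|^{p-2}v)\geq0$ for all $v\in\Dom_p(\a)$. The link is an explicit formula for the metric projection $\bP_{B^p}$ onto the ball of $L^p$: for $u\in L^2\cap L^p$ one has $\bP_{B^p}u=u$ if $\nor{u}_p\leq1$, and otherwise $\bP_{B^p}u$ is obtained by solving a one-dimensional convex optimization pointwise, which produces $\bP_{B^p}u=\mu\,|u|^{p-2}u/\big\|\,|u|^{p-2}u\,\big\|_{?}$-type expressions; the precise shape is that $u-\bP_{B^p}u$ is a nonnegative multiple of $|w|^{p-2}w$ for $w=\bP_{B^p}u$, which is exactly the pairing appearing in $L^p$-dissipativity. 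Concretely, I would: (i) compute $\bP_{B^p}$ by minimizing $\nor{u-v}_2^2$ over $\nor{v}_p\leq1$ using a Lagrange multiplier, getting that the minimizer $v$ satisfies $u-v=\kappa\,|v|^{p-2}v$ for some $\kappa\geq0$ when $\nor{u}_p>1$ (and $\kappa=0$ when $\nor{u}_p\leq1$); (ii) deduce that for every $u\in\Dom(\a)$ with $\nor{u}_p>1$ and $v:=\bP_{B^p}u$, invariance gives $|v|^{p-2}v\in\Dom(\a)$, hence $v\in\Dom_p(\a)$, and $\Re\a(v,u-v)=\kappa\,\Re\a(v,|v|^{p-2}v)$, so the Ouhabaz positivity is equivalent to $L^p$-dissipativity tested on such $v$'s; (iii) conversely, given an arbitrary $v\in\Dom_p(\a)$, normalize and note that $u:=v+\kappa\,|v|^{p-2}v\in\Dom(\a)$ for suitable $\kappa\geq0$ has $\bP_{B^p}u=v$, so that $L^p$-dissipativity at $v$ yields the Ouhabaz positivity at $u$; a short density/scaling argument then covers all $u\in\Dom(\a)$, in particular the trivial case $\nor{u}_p\le1$ where $u-\bP_{B^p}u=0$.

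The main obstacle is step (i)--(ii): establishing the exact pointwise formula for the $L^p$-ball projection $\bP_{B^p}$ and justifying, measure-theoretically, that it has the form $u-\bP_{B^p}u=\kappa\,|\bP_{B^p}u|^{p-2}\bP_{B^p}u$ with a single scalar $\kappa\geq0$ (rather than a function), and that $\bP_{B^p}u\in L^2\cap L^p$ with $|\bP_{B^p}u|^{p-2}\bP_{B^p}u\in L^2$ so that the form pairing makes sense. This requires care because the map $t\mapsto t+\kappa t^{p-1}$ on $[0,\infty)$ must be inverted pointwise and one must check the resulting $v$ genuinely minimizes the $L^2$ distance and lies in the right spaces; once this is in hand, the equivalence is a direct algebraic substitution into Ouhabaz's criterion. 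A secondary technical point is the remark preceding the theorem, that sectoriality of $\a$ is not needed here: this is inherited automatically because Ouhabaz's criterion \cite[Theorem~2.2]{O} itself requires only that $\a$ be densely defined, accretive, continuous and closed, so no additional argument is needed beyond citing it in that generality.
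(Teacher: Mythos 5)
The paper does not itself prove this theorem: it attributes the result to Nittka and adds only the remark that sectoriality may be dropped because Ouhabaz's criterion \cite[Theorem~2.2]{O}, on which the proof rests, does not require it. Your plan is correct and reproduces exactly that route---$L^p$-contractivity is equivalent to invariance of $B^p$ under $\exp(-t\cL)$; Ouhabaz's criterion with $\cC=B^p$ reduces this to $\bP_{B^p}(\Dom(\a))\subset\Dom(\a)$ together with a form positivity condition; and the first-order (KKT) characterization $u-\bP_{B^p}u=\kappa\,|\bP_{B^p}u|^{p-2}\bP_{B^p}u$, $\kappa\ge0$, of the $L^p$-ball projection, valid a.e.\ with a single scalar $\kappa$, translates that positivity condition into $L^p$-dissipativity after the case split $\nor{u}_p\le1$ versus $\nor{u}_p>1$---and you have correctly identified both where the real work is (justifying the projection formula and the membership $|\bP_{B^p}u|^{p-2}\bP_{B^p}u\in L^2$, which Nittka isolates in a separate lemma) and the fact, matching the paper's own remark, that sectoriality plays no role.
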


Define for $p>1$ the operator $\cI_p:{\mathbb C}^n\rightarrow{\mathbb C}^n$ by 
\begin{equation}
\label{eq: J_p}
\cI_p\xi=\xi+(1-2/p)\bar\xi.
\end{equation}
Observe that $\cI_p$ appears in 
\eqref{eq: kabuto} and \eqref{eq: Sparky 21}.

The formula below follows from the chain rule for Sobolev functions. It already appeared in \cite[(7.5)]{CD-DivForm}, where it was proven for $f,|f|^{p-2}f\in H_{0}^{1}(\Omega)$. Basically the same proof works if $f,|f|^{p-2}f\in H^{1}(\Omega)$.
Here we restate it for the reader's convenience. It will be used in the proof of Theorem \ref{t: wabaus}.

\begin{lemma}
\label{l: chain revisited}
Suppose that $p>1$ and that $f$ and $|f|^{p-2}f$ belong to $H^1(\Omega)$. Then 
\begin{equation}
\label{eq: kamilica}
\nabla\left(|f|^{p-2}f\right)=\frac{p}{2}|f|^{p-2}\sign f\cdot\cI_p\left(\sign\bar f\cdot\nabla f\right).
\end{equation}
\end{lemma}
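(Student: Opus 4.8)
The plan is to reduce the identity \eqref{eq: kamilica} to the classical chain rule for Sobolev functions composed with a $C^1$ function away from the zero set, being careful about the behaviour at $\{f=0\}$. First I would fix the convenient writing $|f|^{p-2}f = g(f)$, where for $\zeta\in\C$ we set $g(\zeta)=|\zeta|^{p-2}\zeta$ (and $g(0)=0$). Away from the origin $g$ is smooth as a function of $(\Re\zeta,\Im\zeta)$, so on the open set $\{x:f(x)\neq 0\}$ the standard chain rule for Sobolev functions applies and one computes $\nabla g(f)$ in terms of $\nabla f$, $\nabla\bar f$ and the Wirtinger derivatives $\partial_\zeta g$, $\partial_{\bar\zeta} g$. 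A direct computation gives $\partial_\zeta g = \tfrac{p}{2}|\zeta|^{p-2}$ and $\partial_{\bar\zeta} g = \tfrac{p-2}{2}|\zeta|^{p-4}\zeta^2 = \tfrac{p-2}{2}|\zeta|^{p-2}(\sign\zeta)^2$, so that $\nabla g(f) = \tfrac{p}{2}|f|^{p-2}\nabla f + \tfrac{p-2}{2}|f|^{p-2}(\sign f)^2\,\nabla\bar f$. Factoring out $\tfrac{p}{2}|f|^{p-2}\sign f$ and recalling $\cI_p\eta = \eta + (1-2/p)\bar\eta$ from \eqref{eq: J_p}, this is exactly $\tfrac{p}{2}|f|^{p-2}\sign f\cdot\cI_p(\sign\bar f\cdot\nabla f)$, which is \eqref{eq: kamilica} on $\{f\neq 0\}$.

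Next I would deal with the zero set. Both sides of \eqref{eq: kamilica} are claimed to be equal as $L^1_{\mathrm{loc}}$ (indeed weak-derivative) objects, so it suffices to check that $\nabla(|f|^{p-2}f)=0$ a.e. on $\{f=0\}$, which matches the right-hand side since it carries the factor $|f|^{p-2}\sign f$ that vanishes there (interpreting the product as $0$ on $\{f=0\}$; when $1<p<2$ one should note $|f|^{p-2}\sign f=|f|^{p-1}\operatorname{sign}$-type expression which still tends to $0$). The vanishing of $\nabla h$ a.e.\ on $\{h=0\}$ for a Sobolev function $h$ is a classical fact (Stampacchia-type lemma); applying it to the real and imaginary parts of $h=|f|^{p-2}f\in H^1(\Omega)$ gives $\nabla(|f|^{p-2}f)=0$ a.e.\ on $\{|f|^{p-2}f=0\}=\{f=0\}$. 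Combining this with the previous paragraph yields the identity a.e.\ on all of $\Omega$.

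The one genuinely delicate point — and the main obstacle — is justifying the chain rule uniformly up to (but excluding) the zero set when $1<p<2$, since then $g$ has a singular gradient near $0$ and one cannot simply invoke the smooth chain rule globally. The clean way around this is not to approximate $g$ but to exploit the standing hypothesis that \emph{both} $f$ and $|f|^{p-2}f$ are assumed to lie in $H^1(\Omega)$: on each set $\{|f|>\varepsilon\}$ the function $g$ is $C^1$ with bounded derivatives, the classical chain rule applies, and one obtains the formula there; letting $\varepsilon\downarrow 0$ one recovers \eqref{eq: kamilica} on $\{f\neq 0\}$, and the zero-set argument above finishes it. This is essentially the argument already used in \cite[(7.5)]{CD-DivForm} for $f,|f|^{p-2}f\in H^1_0(\Omega)$; as the excerpt notes, replacing $H^1_0$ by $H^1$ changes nothing, since the chain rule for Sobolev functions and the vanishing of gradients on level sets are purely local statements insensitive to boundary behaviour.

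Finally I would remark that no regularity beyond $f,|f|^{p-2}f\in H^1(\Omega)$ is needed and that the identity is understood in the sense of almost-everywhere equality of the weak gradients; this is exactly the form in which it will be invoked (with $f=T^{A,V}_t h$ for $h\in C_c^\infty(\Omega)$) inside the verification of $L^p$-dissipativity via Nittka's theorem (Theorem \ref{t: Nittka theorem}) in the proof of Theorem \ref{t: wabaus}.
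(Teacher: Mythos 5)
Your proposal is correct and follows the same route the paper indicates: the paper does not reprove the lemma but refers back to \cite[(7.5)]{CD-DivForm}, which is exactly the Wirtinger-calculus chain-rule argument you reconstruct (compute $\partial_\zeta g=\tfrac p2|\zeta|^{p-2}$, $\partial_{\bar\zeta}g=\tfrac{p-2}{2}|\zeta|^{p-2}(\sign\zeta)^2$ on $\{f\ne0\}$, truncate at level $\{|f|>\varepsilon\}$ to make $g$ Lipschitz there, let $\varepsilon\downarrow0$, and kill the contribution of $\{f=0\}$ by the Stampacchia-type vanishing of weak gradients on level sets). One small inaccuracy: for $1<p<2$ the scalar $|f|^{p-2}\sign f$ does \emph{not} tend to $0$ as $f\to0$ (its modulus $|f|^{p-2}$ blows up); the right-hand side of \eqref{eq: kamilica} vanishes on $\{f=0\}$ because $\sign0=0$ forces $\sign\bar f\cdot\nabla f$ (and hence $\cI_p$ of it) to be $0$ there, and additionally $\nabla f=0$ a.e.\ on $\{f=0\}$ by the same Stampacchia lemma you already invoke for the left-hand side. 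With that parenthetical corrected, the argument is complete and matches the paper's.
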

Consequently, for $f,p$ as in Lemma \eqref{l: chain revisited} and any $B\in\cA(\Omega)$ we have
\begin{equation}
\label{eq: bazilika}
\sk{B\nabla f}{\nabla\left(|f|^{p-2}f\right)}_{{\mathbb C}^{d}}
=
\frac{p}{2}|f|^{p-2}\sk{B\left(\sign\bar f\cdot\nabla f\right)}{\cI_p\left(\sign\bar f\cdot\nabla f\right)}_{{\mathbb C}^{d}}.
\end{equation}
Note the symmetric structure of the inner product above, which is expressed in appearance of $(\sign\bar f)\nabla f$ in both factors. 

Taking real parts and recalling \eqref{eq: kabuto} and \eqref{eq: J_p} we conclude
\begin{equation}
\label{eq: knjiga}
\Re\sk{B\nabla f}{\nabla\left(|f|^{p-2}f\right)}_{{\mathbb C}^{d}}
\geqslant\frac{p}{2}\,\Delta_p(B)|f|^{p-2}|\nabla f|^2.
\end{equation}

\begin{remark}
The expression of the form \eqref{eq: bazilika} appears when one differentiates
the integral of $|\exp(-tL_B)\f|^p$ with respect to $t$ and then integrates by parts. 
Hence it does not come as a surprise that the auxiliary operator $\cI_p$ is a part of the Hessian of $|\zeta|^p$.
More precisely, we have the following formula, valid for $\zeta\in{\mathbb C}\backslash\{0\}, \xi\in{\mathbb C}^d$, which is a reformulation of \cite[(5.5)]{CD-DivForm}:
\begin{equation}
\label{eq: ruzmarin}
d^2F_p(\zeta)\xi=\frac{p^2}{2}|\zeta|^{p-2}
\sign{\zeta}\cdot\cI_p(\sign\bar{\zeta}\cdot\xi).
\end{equation}
Here $F_p(\zeta)=|\zeta|^p$ 
and $d^2F_p(\zeta)$ is the Hessian of $F_p$, calculated at 
$\zeta\in{\mathbb C}\equiv\R^2$ and applied to $\xi\in{\mathbb C}^d\equiv\R^d\times\R^d$.
Now \eqref{eq: kamilica} gives
$
p\nabla\left(|f|^{p-2}f\right)=d^2F_p(f)(\nabla f).
$
\end{remark}

\begin{proof}[Proof of Theorem \ref{t: wabaus}]
We will use Nittka's invariance criterion (Theorem \ref{t: Nittka theorem}).
Under our assumptions on $\phi$, the form $\gotb:=e^{i\phi}\gota$ falls into the framework of Nittka's criterion, by Theorem~\ref{t: majke cigan}. The operator associated with $\gotb$ is $e^{i\phi}L_{A,V}$.  

In order to apply Theorem \ref{t: Nittka theorem}, we must check the following: 
\begin{enumerate}[i)]
\item
$\cD(\gotb)=\cD(\a)$ is invariant under $\bP_{B^p}$;
\item
$\gotb$ is $L^p$-dissipative. 
\end{enumerate}

We us start with i). Let $-\Delta+V$ denote the operator associated with the form $\a_{I,V,\oV}$. 
By the basic assumption \eqref{eq: invariance} and
\cite[Theorem 4.31 {\it 2)}]{O}, the semigroup 
$$
\left(e^{-t(-\Delta+V)}\right)_{t>0}
$$
is contractive on $L^\infty(\Omega)$, and thus, by interpolation with the $L^2$-estimates, on $L^p(\Omega)$ for all $p\in[1,\infty]$. Hence Nittka's Theorem \ref{t: Nittka theorem} gives that $\cD(\a_{I,V,\oV})$ is invariant under $\bP_{B^p}$. Now use that $\cD(\a_{I,V,\oV})=\cD(\a)=\cD(\gotb)$.

The statement ii) follows from the (weak) $p$-ellipticity of $e^{i\phi}A$ virtually without changing the argument from \cite{CD-DivForm}. Indeed, 
if $u\in\cD_{p}(\gotb)$, we get from \eqref {eq: knjiga}, applied with $B=e^{i\phi}A$, that
$$
\aligned
\Re\gotb
\left(u,|u|^{p-2}u\right)
&=
\Re\sk{e^{i\phi}A\nabla u}{\nabla\left(|u|^{p-2}u\right)}_{L^2(\Omega)}+\cos\phi\int_\Omega V|u|^p\\
\endaligned
$$
is a sum of two nonnegative terms. 
\end{proof}

\begin{proof}[Proof of Corollary \ref{c: Marof}]
By the continuity of $\e\mapsto\Delta_r(e^{i\e}A)$, see \cite[Section 5.4]{CD-DivForm}, and monotonicity and symmetry properties of $r\mapsto\Delta_r(e^{i\e}A)$, see \cite[Corollary 5.16 and Proposition 5.8]{CD-DivForm}, there exists $\theta=\theta(p,A)>0$ such that $\Delta_{r}(e^{i\e} A)>0$ for all $\e\in [-\theta,\theta]$ and all $r>1$ satisfying $|1-2/r|\leq|1-2/p|$. The contractivity part now follows from Theorem~\ref{t: wabaus} and the relation 
$$
T^{A,V}_{te^{i\e}}=\exp\left(-te^{i\e}L_{A,V}\right), 
$$
whereupon analyticity is a consequence of a standard argument \cite[Theorem II.4.6]{EN}. 
\end{proof}

\section{Proof of Theorem \ref{t: bilinemb}}

In proving Theorem \ref{t: bilinemb} we will combine and enhance the following tools: 
\begin{enumerate}
\item
contractivity and analyticity properties of the semigroups $T_t$ on $L^p$ \cite{CD-DivForm},
\item
convexity properties of the appropriate {\it Bellman function} \cite{DV-Sch, DV-Kato, CD-mult, CD-DivForm},
\item
analysis of the {\it heat flow} associated with the regularized Bellman function \cite{CD-mult, CD-DivForm, CD-Mixed}.
\end{enumerate}

The first item was already settled in Theorem \ref{t: wabaus}.
We treat the remaining two main steps in separate sections as follows.

\subsection{Bellman function}
Unless specified otherwise, we assume everywhere in this section that $p\geqslant 2$ and $q=p/(p-1)$. Let $\delta>0$. 
The Bellman function we use is the function $ \cQ= \cQ_{p,\delta}:{\mathbb C}\times{\mathbb C}\longrightarrow\R_+$ defined by 
\begin{equation}
\label{eq: rupkina}
 \cQ(\zeta,\eta):=
|\zeta|^p+|\eta|^{q}+\delta
\left\{
\aligned
& |\zeta|^2|\eta|^{2-q} & ; & \ \ |\zeta|^p\leqslant |\eta|^q\\
& \frac{2}{p}\,|\zeta|^{p}+\left(\frac{2}{q}-1\right)|\eta|^{q}
& ; &\ \ |\zeta|^p\geqslant |\eta|^q\,.
\endaligned\right.
\end{equation}
This function is due to Nazarov and Treil. 
See \cite{CD-DivForm} or \cite{CD-Mixed} for an up-to-date account on previous appearances of $\cQ$ in the literature.

It is a direct consequence of the above definition that the function $\cQ$ belongs to $C^1({\mathbb C}^2)$ and is of order $C^2$ everywhere {\it except} on the set
$$
\Upsilon=\mn{(\zeta,\eta)\in  {\mathbb C}\times{\mathbb C}}{(\eta=0)\vee (|\zeta|^p=|\eta|^q)}\,.
$$

We shall use the notation from \cite[Section 2.2]{CD-DivForm} to denote the {\it generalized Hessians} $H_{\cQ}^{(A,B)}[v;\omega]$ and $H_{F}^{A}[\zeta;\xi]$. 
Vaguely speaking, if 
$N\in\N$, 
$\omega$ is a $N$-tuple of complex numbers,
$X$ a $N$-tuple of vectors from ${\mathbb C}^d$,
${\mathbf A}$ a $N$-tuple of matrices from ${\mathbb C}^{d\times d}$
and $\Phi:{\mathbb C}^N\rightarrow \R$ is of class $C^2$,
then one has
$$
H_\Phi^{\mathbf A}[\omega;X]=\sk{d^2\Phi(\omega)X}{{\mathbf A}X}_{ ({\mathbb C}^{d})^N }.
$$
In comparison with the exact definition \cite{CD-DivForm} 
we omitted here the tensorization of the Hessian $d^2\Phi(\omega)$ with the $d\times d$ identity matrix, as well as appropriate identifications of complex objects (vectors, matrices) with their real counterparts.

Here is a stronger version of \cite[Theorem 5.2]{CD-DivForm} suited for potentials. It evokes \cite[Theorem 3]{DV-Sch}, where similar properties of $\cQ$ were proved, also for the purpose of treating Schr\"odinger operators. 
The main distinction is that here we treat arbitrary complex matrix functions $A,B$, while in \cite[Theorem 3]{DV-Sch} we only addressed the case $A=B\equiv I$. 
The reader is also referred to \cite{DTV}.

\begin{thm}
\label{t: trica}
Choose $p\geq2$ and $A,B\in\cA_p(\Omega)$.
Then there exists $\delta\in(0,1)$ such that $\cQ= \cQ_{p,\delta}$ as in \eqref{eq: rupkina} admits the following property: 

For any $v=(\zeta,\eta)\in{\mathbb C}^2\setminus\Upsilon$ 
there exists $\tau=\tau(v)>0$ such that, a.e. $x\in\Omega$, 
\begin{itemize}
\item
$H_{\cQ}^{(A(x),B(x))}[v;\omega] 
\geqsim
\tau
|\omega_1|^2
+\tau^{-1}
|\omega_2|^2
$
for any
 $\omega=(\omega_1,\omega_2)\in{\mathbb C}^{d}\times{\mathbb C}^{d}$, 
 and
\item
$(\pd_\zeta \cQ)(\zeta,\eta)\cdot\zeta
\,\geqsim\, \tau |\zeta|^2$
and 
$(\pd_\eta \cQ)(\zeta,\eta)\cdot\eta
\,\geqsim\, \tau^{-1} |\eta|^2$, 
\end{itemize}

with the implied constants depending $p,A,B$, but not on the dimension $d$.
\end{thm}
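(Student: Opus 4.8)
The plan is to reduce Theorem~\ref{t: trica} to the corresponding statement for the potential-free Bellman estimates already established in \cite[Theorem 5.2]{CD-DivForm}, and then to handle the two new first-order (``diagonal'') inequalities directly from the explicit formula \eqref{eq: rupkina}. First I would recall the precise meaning of the generalized Hessian $H_{\cQ}^{(A,B)}[v;\omega]$ from \cite[Section 2.2]{CD-DivForm}: for $v=(\zeta,\eta)\in\C^2\setminus\Upsilon$ and $\omega=(\omega_1,\omega_2)\in\C^d\times\C^d$ it is the real quadratic form obtained by pairing the (complex) Hessian of $\cQ$ against the vectors $A(x)\omega_1$ and $B(x)\omega_2$ in the two slots; the key point is that at points outside $\Upsilon$ the function $\cQ$ is genuinely $C^2$, so the Hessian is a bona fide symmetric bilinear form and the expression is well defined a.e. in $x$.

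Next, I would observe that \cite[Theorem 5.2]{CD-DivForm} already furnishes, for $A,B\in\cA_p(\Omega)$, a value $\delta\in(0,1)$ and, for each $\sigma=(\zeta,\eta)\notin\Upsilon$, a parameter $\tau=\tau(\sigma)>0$ such that $H_{\cQ}^{(A(x),B(x))}[v;\omega]\geqsim \tau|\omega_1|^2+\tau^{-1}|\omega_2|^2$ a.e.\ $x$, with the implied constant depending only on $p,A,B$ and not on $n$. That is exactly the first bullet of Theorem~\ref{t: trica}, so I would simply invoke it — the only thing to check is that the same $\delta$ (hence the same $\cQ$) can be used to get the second bullet, which is a matter of choosing $\delta$ small enough to satisfy finitely many explicit scalar inequalities. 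For the second bullet I would compute $\partial_\zeta\cQ$ and $\partial_\eta\cQ$ piecewise from \eqref{eq: rupkina}. In the region $|\zeta|^p\le|\eta|^q$ one has $(\partial_\zeta\cQ)(\zeta,\eta)\cdot\zeta = p|\zeta|^p + 2\delta|\zeta|^2|\eta|^{2-q}$ and $(\partial_\eta\cQ)(\zeta,\eta)\cdot\eta = q|\eta|^q + (2-q)\delta|\zeta|^2|\eta|^{2-q}$ (using $q\le 2$ when $p\ge 2$, so the coefficient $2-q\ge 0$ and this is manifestly nonnegative); in the region $|\zeta|^p\ge|\eta|^q$ one gets $(\partial_\zeta\cQ)\cdot\zeta = (1+\tfrac{2\delta}{p})p|\zeta|^p$ and $(\partial_\eta\cQ)\cdot\eta = q|\eta|^q + \delta(2-q)|\eta|^q$. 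In every case these are homogeneous of the expected degree; the bound $(\partial_\zeta\cQ)\cdot\zeta\geqsim\tau|\zeta|^2$ and $(\partial_\eta\cQ)\cdot\eta\geqsim\tau^{-1}|\eta|^2$ then follows after rescaling by the same normalization used to define $\tau(\sigma)$ in the proof of \cite[Theorem 5.2]{CD-DivForm} — indeed $\tau$ is essentially a power of $|\zeta|/|\eta|^{q/p}$ chosen to balance the two entries of the Hessian, and the same choice balances the two first-order quantities.

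The step I expect to be the main obstacle is verifying that a \emph{single} $\delta$, and a \emph{single} choice of the function $\sigma\mapsto\tau(\sigma)$, simultaneously delivers the Hessian bound and the two first-order bounds with the \emph{same} $\tau$ in all cases. Concretely, one must track how $\tau(\sigma)$ was defined in \cite{CD-DivForm} (it is pinned by the ratio of the diagonal entries of the Hessian near the interface $|\zeta|^p=|\eta|^q$), then check that the first-order expressions, once homogenized, are comparable to $|\zeta|^2$ and $|\eta|^2$ with constants of the form $\tau$ and $\tau^{-1}$ for that very same $\tau$; this amounts to comparing powers of $|\zeta|$ and $|\eta|$ and is elementary but must be done carefully on both sides of the interface and in the degenerate limits $|\zeta|\to 0$, $|\eta|\to\infty$. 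I would also double-check the dimension-independence claim: since $\tau$ depends only on the scalar quantities $|\zeta|,|\eta|$ and on $p$, and the matrices enter only through the $n$-independent quantities $\lambda(A,B),\Lambda(A,B),\Delta_p(A,B)$, no constant in the argument sees $n$. Finally I would note that the case $p\ge 2$ is assumed throughout this section (the conjugate case $1<p\le 2$ being recovered by the symmetry $\Delta_p=\Delta_q$ and swapping the roles of $(\zeta,A)$ and $(\eta,B)$), so no separate treatment of $1<p<2$ is needed here.
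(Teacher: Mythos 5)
Your plan matches the paper's proof in its essentials: compute the gradient terms piecewise from \eqref{eq: rupkina} using the homogeneity relation \eqref{eq: churchela}, obtain the decoupled Hessian bound from the machinery of \cite{CD-DivForm}, and check that a single $\tau(\sigma)$ serves both bullets simultaneously on each side of the interface $\{|\zeta|^p=|\eta|^q\}$. One imprecision to be aware of: the paper calls Theorem~\ref{t: trica} a \emph{stronger} version of \cite[Theorem 5.2]{CD-DivForm}, and in its own proof it invokes the \emph{proof} (not the statement) of that theorem to read off intermediate estimates like $H_{\cQ}^{(A,B)}[v;\omega]\geqslant\tfrac{pq\Delta_p(A,B)}{2}\big[(p-1)|\zeta|^{p-2}|\omega_1|^2+(q-1)|\eta|^{q-2}|\omega_2|^2\big]$ in the region $|\zeta|^p>|\eta|^q$. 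The statement of \cite[Theorem 5.2]{CD-DivForm} by itself does not hand you the decoupled $\tau|\omega_1|^2+\tau^{-1}|\omega_2|^2$ form, so you cannot literally ``simply invoke it''; you must open the proof and identify $\tau$ explicitly — namely $\tau=(p-1)|\zeta|^{p-2}$ when $|\zeta|^p>|\eta|^q$ and $\tau=D|\eta|^{2-q}$ when $|\zeta|^p<|\eta|^q$ (via Corollary~\ref{c: shinjuku} in the second region, after choosing $\delta$ small enough that the discriminant condition holds). Your intuition that these two choices agree near the interface is correct, since $|\zeta|^{p-2}\sim|\eta|^{2-q}$ there. The paper also treats $p=2$ separately (where $\cQ$ collapses to $(1+\delta)|\zeta|^2+|\eta|^2$ and $\tau=1$); your sweep of ``finitely many scalar inequalities'' should be understood to include this degenerate case. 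Finally, your Wirtinger-derivative computations carry an overall factor of $2$ relative to the paper's convention $\partial_\zeta F_r\cdot\zeta=(r/2)F_r$, which is harmless.
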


\begin{remark} 
It seems that, for arbitrary functions, the first property in general does {\it not} imply the second one {\it with the same $\tau$}, even in the case $A=B\equiv I$. See \cite[Section 2.2]{DV-Sch}.
\end{remark}

In order to prove Theorem \ref{t: trica} we start with a pair of elementary equivalences
which are variants of \cite[Lemma 5.24]{CD-DivForm}.

\begin{lemma}
\label{l: Jamnicka}
Suppose that $\alpha,\beta,\gamma\in\R$. The following statements are equivalent:
\begin{enumerate}
\item
\label{eq: bolecino}
There exists $\tau>0$ such that 
$\alpha x^2-2\beta xy+\gamma y^2\geq \tau x^2+\tau^{-1}y^2$ 
for all $x,y\in\R$.
\item
\label{eq: pijemo}
$\alpha,\gamma>0$ and $\sqrt{\alpha\gamma}-|\beta|\geq1$. 
\end{enumerate}
\end{lemma}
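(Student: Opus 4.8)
\textbf{Proof plan for Lemma \ref{l: Jamnicka}.} The statement is a clean quadratic-form fact, so the plan is to treat the quadratic form $q(x,y)=(\alpha-\tau)x^2-2\beta xy+(\gamma-\tau^{-1})y^2$ and recall that a real symmetric quadratic form is positive semidefinite exactly when its diagonal entries are nonnegative and its determinant is nonnegative. Thus \eqref{eq: bolecino} says precisely: there exists $\tau>0$ with $\alpha\geq\tau$, $\gamma\geq\tau^{-1}$, and $(\alpha-\tau)(\gamma-\tau^{-1})\geq\beta^2$. My goal is to show this system of inequalities in the unknown $\tau$ is solvable if and only if \eqref{eq: pijemo} holds.

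For the direction \eqref{eq: bolecino}$\Rightarrow$\eqref{eq: pijemo}: pick a witnessing $\tau>0$. Then $\alpha\geq\tau>0$ and $\gamma\geq\tau^{-1}>0$, and from the determinant condition together with $\alpha-\tau\geq0$, $\gamma-\tau^{-1}\geq0$ one gets $\beta^2\leq(\alpha-\tau)(\gamma-\tau^{-1})\leq\alpha\gamma-\alpha\tau^{-1}-\gamma\tau+1\leq\alpha\gamma-2\sqrt{\alpha\gamma}+1=(\sqrt{\alpha\gamma}-1)^2$, where the middle step uses $\alpha\tau^{-1}+\gamma\tau\geq2\sqrt{\alpha\gamma}$ by AM--GM. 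Since $\alpha-\tau\geq0$ and $\gamma-\tau^{-1}\geq0$ force $\sqrt{\alpha\gamma}\geq\sqrt{\tau\cdot\tau^{-1}}=1$, we may take square roots to conclude $|\beta|\leq\sqrt{\alpha\gamma}-1$, i.e.\ $\sqrt{\alpha\gamma}-|\beta|\geq1$.

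For the converse \eqref{eq: pijemo}$\Rightarrow$\eqref{eq: bolecino}: given $\alpha,\gamma>0$ and $\sqrt{\alpha\gamma}\geq1+|\beta|$, the natural candidate is the value of $\tau$ that makes $\alpha\tau^{-1}+\gamma\tau$ minimal, namely $\tau=\sqrt{\alpha/\gamma}$; then $\alpha-\tau=\sqrt\alpha(\sqrt\alpha-\sqrt{1/\gamma}\,)$ and one checks $\alpha-\tau\geq0$, $\gamma-\tau^{-1}\geq0$ both reduce to $\sqrt{\alpha\gamma}\geq1$, which holds. Finally $(\alpha-\tau)(\gamma-\tau^{-1})=\alpha\gamma-2\sqrt{\alpha\gamma}+1=(\sqrt{\alpha\gamma}-1)^2\geq\beta^2$ by hypothesis, so all three required inequalities hold with this $\tau$, and the form $\alpha x^2-2\beta xy+\gamma y^2-\tau x^2-\tau^{-1}y^2$ is positive semidefinite, which is exactly \eqref{eq: bolecino}.

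There is no serious obstacle here; the only point requiring a little care is bookkeeping the equivalence ``PSD $\iff$ nonnegative diagonal entries and nonnegative determinant'' in the degenerate cases (e.g.\ when $\alpha-\tau=0$, forcing $\beta=0$), and making sure the chosen $\tau$ is admissible, i.e.\ strictly positive — which it is, since $\tau=\sqrt{\alpha/\gamma}>0$. One should also note the symmetric roles: replacing $\beta$ by $-\beta$ and $x$ by $-x$ shows only $|\beta|$ matters, consistent with the statement. I would present the argument via the explicit choice $\tau=\sqrt{\alpha/\gamma}$ in the hard direction, since it is cleaner than invoking an abstract minimization.
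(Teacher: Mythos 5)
Your proof is correct, and the converse direction \eqref{eq: pijemo}$\Rightarrow$\eqref{eq: bolecino} is essentially identical to the paper's: the same witness $\tau=\sqrt{\alpha/\gamma}$, the same verification that $\alpha\geq\tau$, $\gamma\geq\tau^{-1}$ and $(\alpha-\tau)(\gamma-\tau^{-1})-\beta^2=(\sqrt{\alpha\gamma}-1)^2-\beta^2\geq0$. The forward direction is where you diverge in packaging, though not in substance. You invoke the $2\times2$ PSD criterion to get $\beta^2\leq(\alpha-\tau)(\gamma-\tau^{-1})$ and then apply AM--GM to $\alpha\tau^{-1}+\gamma\tau\geq2\sqrt{\alpha\gamma}$ after expanding the product; the paper instead applies AM--GM directly to $\tau x^2+\tau^{-1}y^2\geq2|xy|$, flips signs to absorb $\beta$, substitutes $t=|x/y|$, and reads off the discriminant condition $(|\beta|+1)^2\leq\alpha\gamma$ of the resulting quadratic in $t$. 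Both paths are elementary and lead to the same inequality; yours is arguably a touch cleaner because it front-loads the standard PSD characterization rather than rederiving it, while the paper's is more self-contained. You are also right to flag the degenerate case $\alpha-\tau=0\Rightarrow\beta=0$ when citing the PSD criterion in the semidefinite form; that care is warranted and the step is sound.
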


\begin{proof}
{\it(\ref{eq: bolecino})} $\Rightarrow$ {\it(\ref{eq: pijemo})}: 
By taking $x\ne0=y$ and $x=0\ne y$ we get
\begin{equation}
\label{eq: amygdala}
\alpha\geq\tau\geq\frac1\gamma>0.
\end{equation}
The assumption {\it(\ref{eq: bolecino})} implies that
$
\alpha x^2-2\beta xy+\gamma y^2\geq 2|xy|
$
and hence 
$$
\alpha x^2+\gamma y^2\geq 2(|\beta|+1)|xy|
\hskip40pt
\forall x,y\in\R\,.
$$
Diviging by $y^2$ and writing $t=|x/y|$ we get 
$$
\alpha t^2-2(|\beta|+1)t+\gamma\geq0\,.
\hskip40pt
\forall t>0\,.
$$
Clearly the above inequality is then also valid for $t\leq0$, and thus for all $t\in\R$. Of course, this is possible if and only if $(|\beta|+1)^2 -\alpha\gamma\leq0$.

\medskip
{\it(\ref{eq: pijemo})} $\Rightarrow$ {\it(\ref{eq: bolecino})}:
Define $\tau:=\sqrt{\alpha/\gamma}$. Our assumption implies 
\eqref{eq: amygdala}.
Then for any $x,y\in\R$ we have
$$
\aligned
\alpha x^2-2\beta& xy+\gamma y^2 -\tau x^2-\tau^{-1}y^2\\
& = (\alpha-\tau)x^2+\left(\gamma-\tau^{-1}\right)y^2-2\beta xy\\
& \geq 2\sqrt{\left(\alpha-\tau\right)\left(\gamma-\tau^{-1}\right)}|xy|-2|\beta||xy|\\
&=\frac{2|xy|}{\sqrt{\left(\alpha-\tau\right)\left(\gamma-\tau^{-1}\right)}+|\beta|}
\left(\big(\alpha-\tau\big)\big(\gamma-\tau^{-1}\big)-\beta^2\right).
\endaligned
$$ 
Since
$$
\big(\alpha-\tau\big)\big(\gamma-\tau^{-1}\big)-\beta^2
=
\left(\sqrt{\alpha\gamma}-1\right)^2-\beta^2\geq0,
$$
our proof is complete.
\end{proof}

\begin{cor}
\label{c: shinjuku}
Suppose $a,b,c\in\R$. The 
following conditions are equivalent: 
\begin{itemize}
\item
there exist $C,\tau>0$ such that 
$ax^2-2bxy+cy^2\geq C(\tau x^2+\tau^{-1}y^2)$ for all $x,y\in\R$;
\item
$a,c>0$ and $ac-b^2>0$. 
\end{itemize}
In this case the largest admissible choice for $C$ is
$C=\sqrt{ac}-|b|$. Moreover, we may take $\tau=\sqrt{a/c}$.
\end{cor}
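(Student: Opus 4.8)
The plan is to derive Corollary~\ref{c: shinjuku} directly from Lemma~\ref{l: Jamnicka} by a rescaling argument. The point is that the inequality $ax^2-2bxy+cy^2\geq C(\tau x^2+\tau^{-1}y^2)$ is, after dividing through by $C>0$, exactly the inequality in item~(\ref{eq: bolecino}) of Lemma~\ref{l: Jamnicka} with $\alpha=a/C$, $\beta=b/C$, $\gamma=c/C$ (and the same $\tau$). So the existence of some $C,\tau>0$ making the displayed inequality hold is equivalent, via the lemma, to the existence of some $C>0$ with $a/C>0$, $c/C>0$ and $\sqrt{(a/C)(c/C)}-|b/C|\geq1$, i.e.\ $a,c>0$ and $\sqrt{ac}-|b|\geq C$. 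The latter is solvable in $C>0$ precisely when $a,c>0$ and $\sqrt{ac}-|b|>0$, which is the same as $a,c>0$ and $ac-b^2>0$ (using $a,c>0$ to pass between $\sqrt{ac}>|b|$ and $ac>b^2$). This establishes the stated equivalence.

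Next I would identify the optimal constant. From the computation above, the admissible values of $C$ are exactly those in $(0,\sqrt{ac}-|b|]$, so the largest admissible $C$ is $C=\sqrt{ac}-|b|$, as claimed. For this extremal $C$ the triple $(\alpha,\beta,\gamma)=(a/C,b/C,c/C)$ satisfies $\sqrt{\alpha\gamma}-|\beta|=1$, so the implication (\ref{eq: pijemo})$\Rightarrow$(\ref{eq: bolecino}) of Lemma~\ref{l: Jamnicka} applies and, inspecting its proof, the choice $\tau=\sqrt{\alpha/\gamma}=\sqrt{(a/C)/(c/C)}=\sqrt{a/c}$ works; this gives the final assertion that one may take $\tau=\sqrt{a/c}$.

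I do not expect any real obstacle here: the corollary is a homogenization of the lemma, and the only mild care needed is the equivalence between $\sqrt{ac}-|b|>0$ and $ac-b^2>0$ (legitimate because $a,c>0$ forces $\sqrt{ac}\geq0$ and $|b|\geq0$, so squaring is order-preserving on the relevant quantities), together with the observation that $C$ appears linearly after dividing the inequality by it. One should just be slightly careful to note that the case $C=\sqrt{ac}-|b|$ still requires $\sqrt{ac}-|b|>0$ strictly for $C$ to be a legitimate positive constant, which is guaranteed by the hypothesis $ac-b^2>0$.

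\begin{proof}
Fix $C>0$. Dividing the inequality $ax^2-2bxy+cy^2\geq C(\tau x^2+\tau^{-1}y^2)$ by $C$ shows that it holds for all $x,y\in\R$ if and only if
$$
\frac aC x^2-2\frac bC xy+\frac cC y^2\geq \tau x^2+\tau^{-1}y^2
\hskip40pt\forall x,y\in\R.
$$
By Lemma~\ref{l: Jamnicka}, applied with $\alpha=a/C$, $\beta=b/C$, $\gamma=c/C$, such a $\tau>0$ exists if and only if $a/C>0$, $c/C>0$ and $\sqrt{(a/C)(c/C)}-|b/C|\geq1$, i.e.\ $a,c>0$ and $\sqrt{ac}-|b|\geq C$. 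Consequently, there exist $C,\tau>0$ with $ax^2-2bxy+cy^2\geq C(\tau x^2+\tau^{-1}y^2)$ for all $x,y$ if and only if $a,c>0$ and $\sqrt{ac}-|b|>0$; since $a,c>0$, the latter is equivalent to $a,c>0$ and $ac-b^2>0$. This proves the equivalence, and shows that the admissible values of $C$ are exactly those in $\big(0,\sqrt{ac}-|b|\,\big]$, so that the largest one is $C=\sqrt{ac}-|b|$.

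Finally, for $C=\sqrt{ac}-|b|$ the triple $(\alpha,\beta,\gamma)=(a/C,b/C,c/C)$ satisfies $\sqrt{\alpha\gamma}-|\beta|=1$, so the implication {\it(\ref{eq: pijemo})}$\Rightarrow${\it(\ref{eq: bolecino})} of Lemma~\ref{l: Jamnicka} applies; as in its proof, the choice $\tau=\sqrt{\alpha/\gamma}=\sqrt{a/c}$ is admissible.
\end{proof}
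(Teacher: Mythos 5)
Your proof is correct and follows exactly the same route as the paper's one-line proof (``Divide the inequality from the first statement by $C$ and use Lemma~\ref{l: Jamnicka}''); you simply spell out the bookkeeping, including the observation that the admissible $C$ form the interval $(0,\sqrt{ac}-|b|\,]$ and that the extremal $C$ together with $\tau=\sqrt{a/c}$ comes from tracking the choice $\tau=\sqrt{\alpha/\gamma}$ made in the proof of Lemma~\ref{l: Jamnicka}.
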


\begin{proof}
Divide the inequality from the first statement by $C$ and use Lemma \ref{l: Jamnicka}.
\end{proof}

\begin{proof}[Proof of Theorem \ref{t: trica}]
We follow, and adequately modify, the proof of \cite[Theorem 5.2]{CD-DivForm}, which was in turn modelled after the proof of \cite[Theorem 3]{DV-Sch}.

Denote $F_p(\zeta)=|\zeta|^p$ for $\zeta\in{\mathbb C}$. When $p=2$, the Bellman function reads $\cQ(\zeta,\eta)=(1+\delta)F_2(\zeta)+F_2(\eta)$ for all $\zeta,\eta\in{\mathbb C}$. Therefore, by \cite[Lemma 5.6]{CD-DivForm},
\begin{equation}
\label{eq: hvanchkara}
\aligned
H_{\cQ}^{(A,B)}[v;\omega]
&=(1+\delta)H_{F_2}^A[\zeta;\omega_1]+H_{F_2}^B[\eta;\omega_2]\\
&=2(1+\delta)\Re\sk{A\omega_1}{\omega_1}+2\Re\sk{B\omega_2}{\omega_2}\\
&\geq 2\lambda(A,B)\left(|\omega_1|^2+|\omega_2|^2\right).
\endaligned
\end{equation}
On the other hand, trivial calculations show that 
\begin{equation}
\label{eq: churchela}
\text{ if }\
\Phi=F_r\otimes F_s\ 
\text{ with }\
r,s\geq0,\ 
\text{ then }\
\left\{
\renewcommand{\arraystretch}{1.2}
\begin{array}{l}
(\pd_\zeta\Phi)(\zeta,\eta)\cdot\zeta=(r/2)\Phi(\zeta,\eta)\\
(\pd_\eta\Phi)(\zeta,\eta)\cdot\eta=(s/2)\Phi(\zeta,\eta).
\end{array}
\renewcommand{\arraystretch}{1}
\right.
\end{equation}
Therefore 
\begin{equation}
\label{eq: kindzmarauli}
\aligned
&(\pd_\zeta \cQ)(\zeta,\eta)\cdot\zeta=(1+\delta)|\zeta|^2\\
&(\pd_\eta \cQ)(\zeta,\eta)\cdot\eta=|\eta|^2.
\endaligned
\end{equation}
By combining \eqref{eq: hvanchkara} and \eqref{eq: kindzmarauli} we prove the theorem in the case $p=2$ with $\tau=1$.

\medskip
Now consider the case $p>2$.
Denote $\textsf{u}=|\zeta|$, $\textsf{v}=|\eta|$, $\textsf{ A}=|\omega_1|$, $\textsf{ B}=|\omega_2|$. 
Recall the notation \eqref{eq: kabuto}. We divide ${\mathbb C}^2\setminus\Upsilon$ into two natural subdomains in which we analyze the gradients and Hessians of $\cQ$ separately.

\medskip
First assume that \framebox{$\textsf{u}^p> \textsf{v}^q>0$}. 
Then, by \cite[proof of Theorem 5.2]{CD-DivForm},
$$
H_{\cQ}^{(A,B)}[v;\omega] 
\geqslant 
\frac{pq\Delta_p(A,B)}2
\left[
(p-1) \textsf{u}^{p-2}\textsf{A}^2
+(q-1) \textsf{v}^{q-2}\textsf{B}^2
\right]\,.
$$
So in this case we may take $\tau=(p-1) \textsf{u}^{p-2}$, as in \cite[proof of Theorem 3]{DV-Sch}.

Regarding the last pair of estimates, since here $\cQ_{p,\delta}$ is a linear combination of functions $F_p\otimes F_0$ and $F_0\otimes F_q$, it follows from \eqref{eq: churchela} that, similarly to \eqref{eq: kindzmarauli},
\begin{equation*}
\aligned
&(\pd_\zeta \cQ)(\zeta,\eta)\cdot\zeta=C_1(p,\delta)|\zeta|^p\\
&(\pd_\eta \cQ)(\zeta,\eta)\cdot\eta=C_2(p,\delta)|\eta|^q.
\endaligned
\end{equation*}
Therefore, since $\textsf{v}^{q-2}> \textsf{u}^{2-p}>0$, with $\tau=(p-1) \textsf{u}^{p-2}$ we get 
\begin{equation}
\aligned
&(\pd_\zeta \cQ)(\zeta,\eta)\cdot\zeta=\widetilde{C_1}(p,\delta)\tau|\zeta|^2\\
&(\pd_\eta \cQ)(\zeta,\eta)\cdot\eta\geq\widetilde{C_2}(p,\delta)\tau^{-1}|\eta|^2.
\endaligned
\end{equation}

\medskip
Suppose now that \framebox{$\textsf{u}^p<\textsf{v}^q$}. 
Extend 
the definition \eqref{eq: kabuto} to all $p\in(0,\infty]$. Then, as in \cite[proof of Theorem 5.2]{CD-DivForm},
\begin{equation*}
\aligned
H_{\cQ}^{(A,B)}[v;\omega] 
\geq
2\delta
\left(
\lambda(A) \textsf{v}^{2-q} \textsf{A}^2
-2(2-q)\Lambda(A,B) \textsf{A}\textsf{B}
+\frac\Gamma4
\textsf{v}^{q-2}\textsf{B}^2
\right),
\endaligned
\end{equation*}
where
\begin{equation*}
\label{eq: oher}
\Gamma=\frac{q^2\Delta_q(B)}\delta+(2-q)^2\Delta_{2-q}(B)\,.
\end{equation*}
Since $\Delta_p(B)>0$, we have that $\Gamma$ grows to infinity as $\delta
\downarrow0$. Since we also have $\lambda(A) 
>0$, there exists $\delta=\delta(p,A,B)>0$ such that 
$$
\frac{\lambda(A)\Gamma}4>\left[(2-q)\Lambda(A,B)\right]^2,
$$ 
which through Corollary \ref{c: shinjuku} implies the existence of $\tau>0$ that accommodates the first requirement of Theorem \ref{t: trica}. Moreover, we may take $\tau=D\textsf{v}^{2-q}$, where $D=2\sqrt{\lambda(A)/\Gamma}$.

Now consider the gradient estimates. In the domain $\{\textsf{u}^{p}<\textsf{v}^q\}$ we have 
$$
\cQ=F_{p}\otimes F_{0}+F_{0}\otimes F_{q}+\delta F_2\otimes F_{2-q}.
$$ 
Again \eqref{eq: churchela} implies that
with (the above chosen) $\tau=D\textsf{v}^{2-q}$ we get
\begin{equation*}
\aligned
(\pd_\zeta \cQ)(\zeta,\eta)\cdot\zeta
  &=\frac p2|\zeta|^p+\delta|\zeta|^2|\eta|^{2-q}
    &&\hskip -11pt \geq\delta|\eta|^{2-q}|\zeta|^2
       && \hskip -11pt 
\geq\frac\delta D\tau|\zeta|^2\\
(\pd_\eta \cQ)(\zeta,\eta)\cdot\eta
  &=\frac q2|\eta|^q+\delta\cdot\frac{2-q}2|\zeta|^2|\eta|^{2-q}
    &&\hskip -11pt \geq\frac q2|\eta|^{q-2}|\eta|^2
      && \hskip -11pt 
\geq\frac{Dq}2\tau^{-1}|\eta|^2.
\endaligned
\end{equation*}
This finishes the proof of the theorem.
\end{proof}

\subsubsection*{{\bf Identification operators}}
\label{s: grand sonata luganskij}
We will explicitly identify ${\mathbb C}^{d}$ with $\R^{2d}$. 
For each $d\in\N$ consider the operator
$\cV_{d}:{\mathbb C}^{d}\rightarrow\R^{d}\times\R^{d}$, defined by 
$$
\cV_{d}(\alpha+i\beta)=
(\alpha,\beta).
$$
 One has, for all  $z,w\in{\mathbb C}^{d}$,
 \begin{equation}
\label{eq: angelis - romance - hatmulin}
\Re\!\sk{z}{w}_{{\mathbb C}^{d}}=\sk{\cV_n(z)}{\cV_n(w)}_{\R^{2d}}
\,.
\end{equation}
If $(\omega_{1},\omega_{2})\in{\mathbb C}^{d}\times{\mathbb C}^{d}$ then
$
\cV_{2n}(\omega_{1},\omega_{2})
=(\Re\omega_1, \Re\omega_2, \Im\omega_1, \Im\omega_2)
\in(\R^{d})^4.
$
On ${\mathbb C}^{d}\times{\mathbb C}^{d}$ define another identification operator, $\cW_{2d}:{\mathbb C}^{d}\times{\mathbb C}^{d}\rightarrow (\R^{d})^4$, by
$$
\cW_{2n}(\omega_{1},\omega_{2})
=(\cV_n(\omega_1),\cV_n(\omega_2))
=(\Re\omega
_1, \Im\omega_1, \Re\omega_2, \Im\omega_2). 
$$
When the dimensions of the spaces on which the identification operators act are clear, we will sometimes omit the indices and instead of $\cV_n,\cW_m$ only write $\cV,\cW$.
For example,
\begin{equation}
\cW(\zeta,\eta)=\left(\cV(\zeta),\cV(\eta)\right)
\hskip 30pt
\forall \zeta,\eta\in{\mathbb C}.
\end{equation}

For functions $\Phi$ on spaces ${\mathbb C}^k$ we will sometimes use their ``pullbacks'' defined on  
$\R^{2k}$, namely 
$$
\Phi_\cV=\Phi\circ\cV^{-1}
\hskip 30pt
\text{or}
\hskip 30pt
\Phi_\cW=\Phi\circ\cW^{-1}
\,.
$$

\subsubsection*{{\bf Regularization of $\cQ$}}

Denote by $*$ the convolution in $\R^4$ and let $(\f_\kappa)_{\kappa>0}$ be a nonnegative, smooth and compactly supported approximation of the identity on $\R^4$. Explicitly, $\f_\kappa(y)=\kappa^{-4}\f(y/\kappa)$, where $\f$ is smooth, nonnegative, radial, of integral 1, and supported in the closed unit ball 
in $\R^4$. If $\Phi:{\mathbb C}^2\rightarrow \R$, define $\Phi*\f_\kappa=(\Phi_\cW*\f_\kappa)\circ \cW:{\mathbb C}^2\rightarrow \R$. Explicitly, for $ v\in{\mathbb C}^2$,
\begin{equation}
\label{eq: big deal}
\aligned
(\Phi*\f_\kappa)( v)
&=\int_{\R^4}\Phi_\cW\left(\cW( v)-y\right)\f_\kappa(y)\,dy\\
&=\int_{\R^4}\Phi\left(v-\cW^{-1}(y)\right)\f_\kappa(y)\,dy.
\endaligned
\end{equation}

Now we can formulate a version of Theorem \ref{t: trica} for the mollifications $\cQ*\f_\kappa$, in the fashion of \cite[Theorem 4]{DV-Sch}. It also strengthens \cite[Corollary 5.5]{CD-DivForm}.

\begin{thm}
\label{t: regular Q}
Choose $p\geq2$ and $A,B\in\cA_p(\Omega)$.
Let $\delta\in(0,1)$ and function $\tau:{\mathbb C}^2\setminus\Upsilon\rightarrow(0,\infty)$ be as in Theorem \ref{t: trica}. Then for $ \cQ= \cQ_{p,\delta}$ as in \eqref{eq: rupkina} and any $ v=(\zeta,\eta)\in{\mathbb C}^2$ we have, 
for a.e. $x\in\Omega$ and every 
 $\omega=(\omega_1,\omega_2)\in{\mathbb C}^{d}\times{\mathbb C}^{d}$, 
\begin{equation}
\label{eq: big fish theory}
H_{ \cQ*\f_\kappa}^{(A(x),B(x))}[ v;\omega]
\,\geqsim\,
\left(\tau*\f_\kappa\right)( v)\cdot|\omega_1|^2
+\left(\tau^{-1}*\f_\kappa\right)( v)\cdot|\omega_2|^2,
\end{equation}
with the implied constant depending on 
$p,A,B$,
but not on the dimension $d$.
\end{thm}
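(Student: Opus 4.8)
The plan is to deduce Theorem \ref{t: regular Q} from Theorem \ref{t: trica} by a standard mollification argument, exploiting the fact that the generalized Hessian $H_{\cQ}^{(A,B)}[\cdot;\omega]$ and the gradient-type quantities $(\pd_\zeta\cQ)(\cdot)\cdot\zeta$, $(\pd_\eta\cQ)(\cdot)\cdot\eta$ are obtained from the derivatives of $\cQ$ by averaging linear combinations with coefficients depending only on $A(x)$, $B(x)$ and on $\sigma$ \emph{in a way that interacts well with convolution}. First I would recall the precise definition of $H_{\Phi}^{(A,B)}[\sigma;\omega]$ from \cite[Section 2.2]{CD-DivForm}: it is a quadratic form in $\omega=(\omega_1,\omega_2)\in\C^d\times\C^d$ whose coefficients are second-order partial derivatives of $\Phi_\cW$ evaluated at $\cW(\sigma)$, paired against the real and imaginary parts of $\omega_1,\omega_2$ through the matrices $\Re A,\Im A,\Re B,\Im B$. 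The key structural observation is that differentiation commutes with convolution, so that $\partial^2(\Phi*\f_\kappa)_\cW = (\partial^2\Phi_\cW)*\f_\kappa$ pointwise; hence
$$
H_{\cQ*\f_\kappa}^{(A(x),B(x))}[\sigma;\omega]
= \int_{\R^4} H_{\cQ}^{(A(x),B(x))}\bigl[\sigma-\cW^{-1}(y);\omega\bigr]\,\f_\kappa(y)\,dy,
$$
i.e. the Hessian of the mollification at $\sigma$ is the $\f_\kappa$-average of the Hessians of $\cQ$ over the translates $\sigma-\cW^{-1}(y)$, where $y$ ranges over $\kappa B^4$. The same identity holds for $(\pd_\zeta(\cQ*\f_\kappa))(\sigma)\cdot\zeta$ and $(\pd_\eta(\cQ*\f_\kappa))(\sigma)\cdot\eta$, with one extra point I address below.

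Granting this, the argument is immediate for the translates that lie off the bad set $\Upsilon$: for a.e.\ $y\in\kappa B^4$ the point $\sigma-\cW^{-1}(y)$ lies in $\C^2\setminus\Upsilon$ (since $\Upsilon$ has measure zero in $\R^4$), so Theorem \ref{t: trica} applies with its associated $\tau(\sigma-\cW^{-1}(y))>0$, giving
$$
H_{\cQ}^{(A(x),B(x))}\bigl[\sigma-\cW^{-1}(y);\omega\bigr]
\geqsim
\tau\bigl(\sigma-\cW^{-1}(y)\bigr)|\omega_1|^2
+\tau\bigl(\sigma-\cW^{-1}(y)\bigr)^{-1}|\omega_2|^2
$$
with an implied constant depending only on $p,A,B$ and not on $n$ — crucially, the \emph{same} constant for all $\sigma$ and all translates, because the constant in Theorem \ref{t: trica} was uniform. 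Integrating against $\f_\kappa(y)\,dy$ and using that $\f_\kappa\geq0$ has integral $1$, the left side becomes $H_{\cQ*\f_\kappa}^{(A(x),B(x))}[\sigma;\omega]$ and the right side becomes exactly $(\tau*\f_\kappa)(\sigma)|\omega_1|^2+(\tau^{-1}*\f_\kappa)(\sigma)|\omega_2|^2$ up to the same constant, which is \eqref{eq: big fish theory}. The gradient estimates follow identically from the second bullet of Theorem \ref{t: trica}.

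The main obstacle — really the only subtle point — is making the interchange of differentiation/evaluation and convolution rigorous near $\Upsilon$, where $\cQ$ is merely $C^1$ and its second derivatives have jump discontinuities across the hypersurface $\{|\zeta|^p=|\eta|^q\}$ and blow up (mildly) near $\{\eta=0\}$. I would handle this by noting that $\cQ\in C^1(\C^2)$ with $\cQ$ locally Lipschitz on its gradient, so $D^2\cQ$ exists in $L^1_{\rm loc}$ (indeed $L^\infty_{\rm loc}$ away from $\eta=0$, and locally integrable near $\eta=0$ since the only singular factors are powers $|\eta|^{-q}$ with $q\le 2$ against an $\R^4$-measure), whence $\cQ*\f_\kappa\in C^\infty$ and $D^2(\cQ*\f_\kappa)=(D^2\cQ)*\f_\kappa$ holds in the classical pointwise sense; the set $\Upsilon$ being Lebesgue-null in $\R^4$ is what lets us ignore it in the integral. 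For the gradient terms one must additionally observe that the factor $\zeta$ (resp.\ $\eta$) in $(\pd_\zeta\cQ)(\sigma)\cdot\zeta$ is evaluated at the \emph{center} $\sigma$, not at the translate; but $\cQ$ being a finite sum of tensor products $F_r\otimes F_s$ in each of the two regions, identity \eqref{eq: churchela} shows $(\pd_\zeta\cQ)(\sigma')\cdot\zeta'$ is itself a linear combination of $F_r\otimes F_s$ evaluated at $\sigma'$, so convolving and then pairing with $\sigma$ versus pairing with $\sigma'$ and then convolving differ only by controlled error terms that are absorbed into the stated inequality exactly as in \cite[Theorem 4]{DV-Sch} — I would simply cite that computation. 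This completes the proof.
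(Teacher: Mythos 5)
Your argument is correct and is essentially the paper's own proof: both rely on the identity
$H_{\cQ*\f_\kappa}^{(A,B)}[\sigma;\omega]=\int_{\R^4}H_{\cQ}^{(A,B)}[\sigma-\cW^{-1}(y);\omega]\,\f_\kappa(y)\,dy$
(which the paper imports from \cite[proof of Corollary 5.5]{CD-DivForm}, while you justify it directly), apply Theorem~\ref{t: trica} under the integral using the uniformity of the implied constant and the fact that $\Upsilon$ is Lebesgue-null, and read off \eqref{eq: big fish theory} via the convention \eqref{eq: big deal}. One small remark: Theorem~\ref{t: regular Q} asserts only the Hessian bound, so your final paragraph about mollified gradient-type quantities $(\pd_\zeta\cQ)(\sigma)\cdot\zeta$ (and the evaluation-at-center subtlety borrowed from \cite[Theorem 4]{DV-Sch}) is out of scope here, though it would be relevant if one needed the analogue of the second bullet of Theorem~\ref{t: trica} for $\cQ*\f_\kappa$.
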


\begin{proof}
As in \cite[proof of Corollary 5.5]{CD-DivForm} we obtain, for $ v\in{\mathbb C}^2$, $\omega\in{\mathbb C}^{d}\times{\mathbb C}^{d}$ and $\kappa>0$,
\begin{equation*}
H_{ \cQ*\f_\kappa}^{(A,B)}[ v;\omega]
=\int_{\R^4}H_{ \cQ}^{(A,B)}\left[ v-\cW^{-1}( y);\omega\right]\f_\kappa( y)\,d y\,.
\end{equation*}
The first estimate of Theorem \ref{t: trica} now gives 
\begin{equation*}
H_{ \cQ*\f_\kappa}^{(A,B)}[ v;\omega]
\geqsim
\int_{\R^4}
\left(\tau( v-\cW^{-1}( y))|\omega_1|^2+\tau^{-1}( v-\cW^{-1}( y))|\omega_2|^2\right)
\f_\kappa( y)\,dy.
\end{equation*}
By recalling the convention \eqref{eq: big deal}, we see that we just obtained \eqref{eq: big fish theory}.
\end{proof}

\subsection{Heat flow}
\label{s: Heat flow}
As announced before, we prove the bilinear embedding by means of a {\it heat flow} technique applied to the Nazarov--Treil function $\cQ$. We follow the outline of the method in \cite{CD-DivForm, CD-Mixed}, where we proved the theorem for $V=W=0$. The presence of nonzero potentials, considered in this paper, calls for settling a couple of technical problems which do not appear in the homogeneous case.
As a historical note we mention that the early versions of the heat-flow method associated with Bellman functions go back to the papers by Petermichl--Volberg \cite{PV} and Nazarov--Volberg \cite{NV}.

Proving bilinear embedding on arbitrary domains $\Omega$ \cite{CD-Mixed}, as opposed to proving it for $\Omega=\R^{d}$ \cite{CD-DivForm}, requires a major modification of the heat-flow argument. See \cite[Section 1.4]{CD-Mixed} for explanation. The gist of the problem is to justify {\it integration by parts}, which was overcome in \cite{CD-Mixed} by approximating $\cQ$ by a specifically constructed sequence of functions, see \cite[Theorem 16]{CD-Mixed}.

For $f,g\in (L^p\cap L^q)(\Omega)$ and $A,B\in\cA(\Omega)$ define
$$
\cE(t)=\int_{\Omega} \cQ\left(T^{A,V,\oV}_{t} f, T^{B,W,\oW}_{t}g\right),\quad t>0.
$$
Known estimates of $\cQ$ and its gradient \cite[Theorem 4]{CD-Riesz}
and the analyticity of $(T^{A}_{t})_{t>0}$ and $(T^{B}_{t})_{t>0}$ (see Theorem~\ref{t: wabaus}) imply that $\cE$ is continuous on $[0,\infty)$ and differentiable on $(0,\infty)$ with a continuous derivative. As in our previous works involving the heat flow, our aim is to prove two-sided estimates of
\begin{equation}
\label{eq: Messiah}
-\int_0^\infty\cE'(t)\,dt
\end{equation}
which will then, in a by now familiar Bellman-heat fashion, see e.g. \cite{CD-DivForm,CD-Mixed} and the references there, merge into bilinear embedding. Regarding the {\bf upper estimates} of \eqref{eq: Messiah}, we use upper pointwise estimates on $\cQ$ (see, for example \cite[Proposition 5.1]{CD-DivForm}) to get
\begin{equation}
\label{eq: sol}
-\int_0^\infty\cE'(t)\,dt\leq\cE(0)\,\leqsim\,\nor{f}_p+\nor{g}_q.
\end{equation}

Now we turn to {\bf lower estimates}. For
\begin{equation}
\label{eq: vw}
(v,w)=\left(T^{A,V,\oV}_{t} f, T^{B,W,\oW}_{t} g\right)
\end{equation}
we have, by Corollary \ref{c: Marof}, that $v,w\in L^p\cap L^q$ and 
\begin{equation}
\label{eq: ostanes}
\aligned
-\cE'(t)
=2\,\Re\int_{\Omega}
\left(
\sk{(\pd_{\bar\zeta} \cQ)(v,w)}{L_{A,V}v}_{\mathbb C}
+
\sk{(\pd_{\bar\eta} \cQ)(v,w)}{L_{B,W}w}_{\mathbb C}
\right).
\endaligned
\end{equation}
See \cite[Section 6.1]{CD-Mixed} for more details on how to justify \eqref{eq: ostanes}.

\subsection{Special case: bounded potentials}
\label{s: BP}
First we prove the bilinear embedding under additional assumption that $V,W$ are (nonnegative and) {\it essentially bounded}. In that case, $\cD(L_{A,V})=\cD(L_{A,0})$ and for $u\in\cD(L_{A,V})$ we have
$$
L_{A,V}u=L_{A,0}u+Vu
$$
and the same for $B,W$. Consequently, \eqref{eq: ostanes} gives 
\begin{equation}
\label{eq: sum}
-\cE'(t)=I_1+I_2,
\end{equation}
where 
$$
\aligned
I_1&=2\,\Re\int_{\Omega}
\left(
\sk{(\pd_{\bar\zeta} \cQ)(v,w)}{L_{A,0}v}_{{\mathbb C}}
+
\sk{(\pd_{\bar\eta} \cQ)(v,w)}{L_{B,0}w}_{{\mathbb C}}
\right)\\
I_2&=2\int_\Omega\left[V\left(\pd_\zeta \cQ\right)(v,w)\cdot v+W\left(\pd_\eta \cQ\right)(v,w)\cdot w\right].
\endaligned
$$
The integral $I_2$ is absolutely convergent by the upper pointwise estimates of the gradient of $\cQ$ (see, for example \cite[Proposition 5.1]{CD-DivForm}), H\"older's inequality, the contractivity of the semigroups from \eqref{eq: vw}  on $L^p$ and $L^q$, respectively (cf. Theorem \ref{t: wabaus}), and the assumption that $V,W\in L^\infty(\Omega)$.

We will estimate the terms $I_1,I_2$ separately. 

\subsubsection{Estimate of $I_1$} As in \cite[Section 6.1]{CD-Mixed} we get
$$
I_1\,\geq\,
\liminf_{\kappa\searrow 0}
\int_{\Omega}H^{(A,B)}_{\cQ*\f_\kappa}\left[\left(v,w\right);\left(\nabla v,\nabla w\right)\right].
$$
(It is here that we needed to restrict the choice of $\oV,\oW$ to (a)-(c) from Section \ref{s: Karlstejn}.) Next we apply Theorem \ref{t: regular Q} for 
$$
I_1\,\geqsim\,
\liminf_{\kappa\searrow 0}
\int_{\Omega}
\left[\left(\tau*\f_\kappa\right)(v,w)\cdot|\nabla v|^2
+\left(\tau^{-1}*\f_\kappa\right)(v,w)\cdot|\nabla w|^2\right].
$$
We see from the proof of Theorem \ref{t: trica} that one can choose the function $\tau$ to be
continuous on ${\mathbb C}^2\backslash\Upsilon$. Now Fatou's lemma gives
$$
I_1\,\geqsim\,
\int_{\Omega}
\left[\tau(v,w)\cdot|\nabla v|^2
+\tau^{-1}(v,w)\cdot|\nabla w|^2\right].
$$
\subsubsection{Estimate of $I_2$} 
Using that $V,W$ are nonnegative, we get from Theorem \ref{t: trica}
$$
I_2\,\geqsim\,\int_\Omega\left(\tau(v,w)\cdot V|v|^2+\tau(v,w)^{-1}\cdot W|w|^2\right).
$$
The last two estimates give, together with \eqref{eq: sum} 
\begin{equation}
\label{eq: papar}
-\cE'(t)\
\geqsim\,\int_\Omega
\sqrt{|\nabla v|^2+V|v|^2}\sqrt{|\nabla w|^2+W|w|^2}.
\end{equation}

\subsubsection{Summary}
By merging \eqref{eq: sol} and \eqref{eq: papar} we get 
$$
\aligned
\int_0^\infty\int_\Omega
\sqrt{|\nabla T_{t}^{A,V,\oV} f|^2+V|T_{t}^{A,V,\oV} f|^2}&\sqrt{|\nabla T_{t}^{B,W,\oW} g|^2+W|T_{t}^{B,W,\oW} g|^2}\,dx\,dt\\
&\hskip 50pt \,\leqsim\,
\nor{f}_p+\nor{g}_q\,.
\endaligned
$$
Now use the standard trick and replace $f$ by $\mu f$ and $g$ by $\mu^{-1}g$, with $\mu>0$, and minimize in $\mu$. This finishes the proof of Theorem \ref{t: bilinemb} in the case of 
{\it bounded} $V,W$.

\subsection{General case: unbounded potentials} 
Theorem~\ref{t: bilinemb} will follow from the special case of bounded potentials already proved in Section~\ref{s: BP}, once we prove the following approximation result.

Let $U\in L^{1}_{\rm loc}(\Omega)$ be a nonnegative function. For each $n\in\N$ define
$$
U_{n}:=\min\{U,n\}.
$$
We also 
set $U_{\infty}=U$.
\begin{thm}
\label{t: 1a}
For all $f\in L^{2}(\Omega)$, $A\in \cA(\Omega)$, $U\in L_{loc}^1(\Omega)$ and all $t>0$ we have
$$
\aligned
\nabla T^{A,U_{n}}_{t}f 
& \rightarrow \nabla T^{A,U}_{t}f 
&&\text{in }L^{2}(\Omega;{\mathbb C}^{d}),\\
U^{1/2}_{n} T^{A,U_{n}}_{t}f
& \rightarrow U^{1/2}T^{A,U}_{t}f 
&& \text{in }L^{2}(\Omega)
\endaligned
$$
as $n\rightarrow\infty$.
\end{thm}
The proof will be given in Section~\ref{s: Teske boje}. First we need a few technical results.

\begin{notation}
Until the end of this chapter we will work with a single matrix function $A$. Therefore, in order to make the text more readable, we will from now on omit $A$ in the notation for the operators and semigroups. For example, we will write 
$T_t^U$ instead of $T_t^{A,U}$ and $L_U$ instead of $L_{A,U}$. 
\end{notation}

Recall that $\nu(A)$ was defined on page \pageref{Borodin Quartet}. 
It then follows from the positivity of $U$ and the estimate \cite[(1.26)]{O} that the operators $L_{U_{n}}$, $n\in\N\cup\{\infty\}$, are uniformly sectorial of angle $\nu=\nu(A)$ in the sense that
\begin{equation}
\label{eq: 3}
\norm{(\zeta-L_{U_{n}})^{-1}}{2}\leq \frac{1}{{\rm dist}(\zeta,\overline{\bS}_{\nu })},\quad \forall \zeta\in {\mathbb C}\setminus \overline{\bS}_{\nu }.
\end{equation}

We will use the next lemma whose proof is based on an idea of Ouhabaz \cite{Ouhabaz1995} that we learnt from \cite{OuhabazBailey}.
\begin{lemma}
\label{l: 2a}
For all $f\in L^{2}(\Omega)$ and all $s>0$ we have
\begin{equation}
\label{eq: 5}
\left(s+L_{U_{n}}\right)^{-1}f\longrightarrow \left(s+L_{U}\right)^{-1}f \quad \text{in }L^{2}(\Omega), \text{ as }n\rightarrow\infty.
\end{equation}
\end{lemma}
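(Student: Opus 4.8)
The plan is to prove convergence of the resolvents $(s+L^A_{U_n})^{-1}f$ via a monotonicity argument for the associated forms, in the spirit of the classical results on monotone convergence of nonnegative sesquilinear forms (Kato, Simon), adapted to the complex-coefficient setting following Ouhabaz. Write $\gota_n:=\gota^{A}_{U_n}$ for the form with domain $\Dom(\gota_n)=\oV\cap\Dom(U_n^{1/2})$, and note that since $U_n=\min(U,n)$ increases pointwise to $U$, the quadratic forms $u\mapsto\int_\Omega U_n|u|^2$ increase to $u\mapsto\int_\Omega U|u|^2$, and $\Dom(U^{1/2})=\bigcap_n\Dom(U_n^{1/2})$ with $\Dom(\gota_\infty)=\bigcap_n\Dom(\gota_n)$. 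Set $u_n:=(s+L^A_{U_n})^{-1}f$ and $u:=(s+L^A_U)^{-1}f$; by definition these satisfy
\begin{equation*}
s\sk{u_n}{v}_{L^2}+\gota_n(u_n,v)=\sk{f}{v}_{L^2}\qquad\forall\,v\in\Dom(\gota_n),
\end{equation*}
and similarly for $u$ with $v\in\Dom(\gota_\infty)$.

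The key steps, in order: First I would establish uniform bounds. Testing the equation for $u_n$ against $v=u_n$, taking real parts and using ellipticity \eqref{eq: ellipticity} together with $U_n\geq0$ gives
\begin{equation*}
s\nor{u_n}_2^2+\lambda\nor{\nabla u_n}_2^2+\int_\Omega U_n|u_n|^2\leq \Re\sk{f}{u_n}_{L^2}\leq\nor{f}_2\nor{u_n}_2,
\end{equation*}
so $(u_n)$ is bounded in $\oV$, $(\nabla u_n)$ is bounded in $L^2(\Omega;\C^d)$, and $(\int_\Omega U_n|u_n|^2)_n$ is bounded; in particular $U_m^{1/2}u_n$ is bounded in $L^2$ for each fixed $m\leq n$. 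Second, extract a weakly convergent subsequence $u_n\rightharpoonup \tilde u$ in $W^{1,2}(\Omega)$; since $\oV$ is closed (hence weakly closed) we have $\tilde u\in\oV$. By the uniform bound on $\int_\Omega U_n|u_n|^2$ and Fatou's lemma applied along the subsequence (using $U_m\leq U_n$ for $n\geq m$ and weak lower semicontinuity of $v\mapsto\int U_m|v|^2$), one gets $\int_\Omega U_m|\tilde u|^2\leq C$ for every $m$, hence by monotone convergence $\int_\Omega U|\tilde u|^2\leq C$, i.e. $\tilde u\in\Dom(U^{1/2})$, so $\tilde u\in\Dom(\gota_\infty)$. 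Third, pass to the limit in the weak formulation: fix $v\in C_c^\infty(\Omega)$ (or more generally $v\in\Dom(\gota_\infty)$); the term $\int_\Omega\sk{A\nabla u_n}{\nabla v}$ converges by weak convergence of gradients, the term $s\sk{u_n}{v}$ converges, and for the potential term write $\int_\Omega U_n u_n\bar v=\int_\Omega U_n^{1/2}u_n\cdot U_n^{1/2}\bar v$; here $U_n^{1/2}\bar v\to U^{1/2}\bar v$ in $L^2$ (dominated convergence, as $v$ is bounded with compact support and $U\in L^1_{\rm loc}$), while $U_n^{1/2}u_n\rightharpoonup U^{1/2}\tilde u$ weakly in $L^2$ — this last weak convergence being identified by testing against $L^\infty_c$ functions and using the uniform bound. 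Hence $\tilde u$ satisfies the defining equation for $u$, and by uniqueness $\tilde u=u$. Fourth, upgrade to strong convergence and to the full sequence: strong $L^2$ convergence $u_n\to u$ follows because, testing with $v=u_n-u$ and using coercivity, $s\nor{u_n-u}_2^2+\lambda\nor{\nabla(u_n-u)}_2^2\leq$ (terms that vanish as $n\to\infty$ using the weak convergences just established and the monotonicity $\int U_n|u_n|^2\geq\int U_m|u_n|^2\to\int U_m|u|^2$); and since the limit $u$ is the same for every subsequence, a standard subsequence-of-subsequence argument promotes this to convergence of the whole sequence.

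The main obstacle I expect is the handling of the potential term in the limit passage: unlike the real self-adjoint case where monotone form convergence is textbook (Simon's theorem), here the form is only sectorial, so one cannot directly invoke monotone convergence of forms, and one must carefully control $\int_\Omega U_n u_n\bar v$ — the product of a sequence converging only weakly ($U_n^{1/2}u_n$) with one converging strongly ($U_n^{1/2}\bar v$) — and simultaneously verify that the weak limit of $U_n^{1/2}u_n$ is genuinely $U^{1/2}\tilde u$ with $\tilde u\in\Dom(U^{1/2})$, which is exactly where the uniform energy bound and Fatou's lemma do the essential work. Once Lemma~\ref{l: 2a} is in hand, Proposition~\ref{p: 1a} follows by the same energy identities: differencing the equations for $(\zeta-L^A_{U_n})^{-1}f$ and $(\zeta-L^A_{U})^{-1}f$, taking real parts and using \eqref{eq: ellipticity} controls $\nor{\nabla(\,\cdot\,)}_2$ and $\int U_n|\cdot|^2$ in terms of the already-established $L^2$ resolvent convergence plus the cross terms, yielding \eqref{eq: a1a} and \eqref{eq: a2a}; the passage from resolvents to the semigroup identities of Theorem~\ref{t: 1a} is then the quoted Cauchy-integral argument using the uniform sectoriality \eqref{eq: 3}.
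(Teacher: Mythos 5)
Your proof is correct, but for Lemma~\ref{l: 2a} itself it takes a genuinely different route from the paper. The paper dispatches this lemma by citing the argument of Ouhabaz and Bailey, whose ingredients are the Kato--Simon monotone convergence theorem for increasing sequences of \emph{symmetric} sesquilinear forms, the theory of holomorphic families of forms (to pass from the symmetric real part of the form to the full complex accretive form), and a vector-valued Vitali theorem. You bypass all of this machinery and prove the $L^2$ resolvent convergence directly: uniform energy bounds from ellipticity, weak compactness in $W^{1,2}(\Omega)\times L^2(\Omega)$, identification of the weak limit by passing to the limit in the weak formulation, and an energy-identity argument to upgrade weak to strong $L^2$ convergence. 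The step that makes this work without circularity is your strong $\times$ weak splitting $\int U_n u_n\bar v=\int(U_n^{1/2}u_n)(U_n^{1/2}\bar v)$ together with the Fatou-type argument showing the weak limit lies in $\Dom(U^{1/2})$: it identifies the weak $L^2$-limit of $U_{n_k}^{1/2}u_{n_k}$ using only weak convergence of $u_{n_k}$, whereas the paper at the corresponding point in the proof of Proposition~\ref{p: 1a} already knows $u_n\to u$ in $L^2$ because it has Lemma~\ref{l: 2a} in hand. In effect your blind proof is structurally closer to the paper's proof of Proposition~\ref{p: 1a} than to its proof of Lemma~\ref{l: 2a}: it proves Proposition~\ref{p: 1a} for $\zeta=-s$, $s>0$, in a single pass and renders the separate Lemma~\ref{l: 2a} (and the Kato--Simon/Vitali machinery behind it) logically superfluous. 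The trade-off is that you carry out the limit identification by hand rather than citing the monotone form-convergence theorem.

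One small slip in your setup: you write $\Dom(U^{1/2})=\bigcap_n\Dom(U_n^{1/2})$ and $\Dom(\gota_\infty)=\bigcap_n\Dom(\gota_n)$. Since each $U_n$ is bounded, $\Dom(U_n^{1/2})=L^2(\Omega)$ and $\Dom(\gota_n)=\oV$, so both intersections are strictly too large whenever $U$ is unbounded. The statement your Fatou argument actually delivers (and uses) is the correct one: $u\in\Dom(U^{1/2})$ if and only if $\sup_n\int_\Omega U_n|u|^2<\infty$. This slip does not affect the rest of your argument.
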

\begin{proof}[Sketch of the proof.]
The proof is based on the argument presented in \cite[p. 19-20]{OuhabazBailey}. Let us outline the main steps. 

Recall the definitions  \eqref{eq: mongolska} and \eqref{eq: govedina} and consider 
the sesquilinear forms $\gota=\gota_U$ and $\gota_n:=\gota_{U_n}$. 
We define operations on forms as in \cite[Chapter VI, {\tretamajka \char120}\,1.1]{Kat}.
For $z\in{\mathbb C}$ and $n\in\N$ denote 
$$
\aligned
\gota_z &:=\Re\gota\hskip 5pt+z\,\Im \gota\\
\gota_{n,z} &:=\Re\gota_{n}+z\,\Im \gota_n.
\endaligned
$$
Note that
$\gota=\gota_i$. 
Set $\delta=\delta(A)=\cot\nu(A)$ and 
${\mathcal O}:=\mn{z\in{\mathbb C}}{|\Re z|<\delta}$. 
It can be shown
that if $z\in\cO$ then
$\gota_z$ and $\gota_{n,z}$ are closed sectorial forms.

Let $L_z$ and $L_{n,z}$ be the operators associated with 
$\gota_z$ and $\gota_{n,z}$, respectively. 
For $z\in\cO$ and $s>0$, the operator $s+L_z$ is invertible and 
$\nor{(s+L_z)^{-1}}_2\leq\nor{f}_2/s$,
cf. \eqref{eq: 3}. 
A theorem by Kato \cite[p. 395]{Kat}, see also \cite{VoVoOL}, shows that $z\mapsto (s+L_z)^{-1}$ is holomorphic as a map 
from $\cO$ to the space of bounded linear operators on $L^2(\Omega)$.
The same holds for 
the map $z\mapsto (s+L_{n,z})^{-1}$.

A monotone convergence theorem for sequences of symmetric sesquilinear forms, see \cite[Theorem~3.13a, p. 461]{Kat} and \cite[Theorem~3.1]{Simon1978}, gives that for every $s>0$, $z\in(-\delta,\delta)$ and $f\in L^2(\Omega)$ we have
\begin{equation}
\label{eq: Triangle Offense}
(s+L_{n,z})^{-1}f\rightarrow(s+L_{z})^{-1}f
\hskip 30pt
\text{ in }L^2(\Omega),\text{ as }n\rightarrow\infty.
\end{equation}
A vector-valued version of Vitali's theorem \cite[Theorem~A.5]{ABHbook} implies that
$(s+L_{n,z})^{-1}f\rightarrow (s+L_{z})^{-1}f$ for all $z\in\cO$, and \eqref{eq: 5} follows by taking $z=i$.
\end{proof}

\begin{prop}
\label{p: 1a}
For all $f\in L^{2}(\Omega)$ and all $\zeta\in{\mathbb C}\setminus\overline{\bS}_{\nu }$ we have 
\begin{align}
\left(\zeta-L_{U_{n}}\right)^{-1}f 
&\rightarrow 
\left(\zeta-L_{U}\right)^{-1}f 
&& \text{in } L^{2}(\Omega),
\label{eq: a11a}\\
\nabla\left(\zeta-L_{U_{n}}\right)^{-1}f 
&\rightarrow 
\nabla\left(\zeta-L_{U}\right)^{-1}f 
&& \text{in } L^{2}(\Omega;{\mathbb C}^{d}),
\label{eq: a1a}\\
U^{1/2}_{n}\left(\zeta-L_{U_{n}}\right)^{-1}f
&\rightarrow U^{1/2}\left(\zeta-L_{U}\right)^{-1}f
&& \text{in } L^{2}(\Omega),
\label{eq: a2a}
\end{align}
as $n\rightarrow\infty$.
\end{prop}

\begin{proof}
Recall the notation $U_{\infty}=U$. 
Fix $f\in L^{2}(\Omega)$. For $n\in\N\cup\{\infty\}$ and $\zeta\in {\mathbb C}\setminus \overline{\bS}_{\nu }$ set
$$
u_{n}(\zeta):=\left(L_{U_{n}}-\zeta\right)^{-1}f\in \cD(L_{U_{n}})\subseteq\oV\subseteq H^{1}(\Omega).
$$
By ellipticity of $A$, for every 
$n\in\N\cup\{\infty\}$ and $\zeta\in {\mathbb C}\setminus\overline\bS_{\nu }$ we have
$$
\aligned
\lambda\|\nabla u_{n}\|^{2}_{2}+\|U^{1/2}_{n}u_{n}\|^{2}_{2}
&\leq\Re\int_{\Omega}\left[\sk{A\nabla u_{n}}{\nabla u_{n}}+U_{n}u_{n}\overline{u}_{n}\right]\\
&=\Re\int_{\Omega}\left(L_{U_{n}}u_{n}\right)\overline{u_{n}}\\
&=\Re\int_{\Omega}f\overline{u_{n}} +(\Re\zeta)\int_{\Omega}|u_n|^2\\
&\leq \|f\|_{2}\|u_{n}\|_{2}+|\Re\zeta|\cdot\|u_{n}\|_{2}^2.
\endaligned
$$
Therefore, the uniform sectoriality estimate \eqref{eq: 3} gives,
for all $n\in\N\cup\{\infty\}$ and $\zeta\in {\mathbb C}\setminus\overline\bS_{\nu }$,
\begin{equation}
\label{eq: 3bis}
\norm{u_{n}(\zeta)}{2}+\|\nabla u_{n}(\zeta)\|_{2}+\|U^{1/2}_{n}u_{n}(\zeta)\|_{2}
\leq C_{\lambda,\nu}(\zeta)
 \norm{f}{2},
\end{equation}
where $C_{\lambda,\nu}(\zeta)>0$ is continuous in $\zeta$.

\medskip
Now temporarily fix $s>0$ and set 
$$
\aligned
u_n &= \hskip 3pt u_n(-s)\\
u &= u_\infty(-s).
\endaligned
$$
By \eqref{eq: 3bis}, the sequence $\big(u_{n},U^{1/2}_{n}u_{n}\big)_{n\in\N}$ is bounded in $H^{1}(\Omega)\times L^{2}(\Omega)$, hence it admits a weakly convergent subsequence. That is, there exist a subsequence of indices $(n_{k})_{k\in\N}$ and functions $w\in H^{1}(\Omega)$ and $\widetilde{w}\in L^{2}(\Omega)$ such that
$$
\aligned
u_{n_{k}} &\rightharpoonup  w\quad \text{ in } H^{1}(\Omega),\\
U^{1/2}_{n_{k}}u_{n_{k}} & \rightharpoonup  \widetilde{w}\quad \text{ in } L^{2}(\Omega).
\endaligned
$$
Here the symbol $\rightharpoonup$ denotes weak convergence (that is, convergence in the weak topology).
Lemma~\ref{l: 2a} reads 
\begin{equation}
\label{eq: 6}
\lim_{n\rightarrow\infty}u_{n}= u_{\infty}\quad \text{in }L^{2}(\Omega),\quad \forall s>0.
\end{equation}
which implies that $w=u$.

Fix $\varphi\in C^{\infty}_{c}(\Omega)$. 
Since $U\in L^{1}_{\rm loc}(\Omega)$, we have $U^{1/2}\varphi\in L^{2}(\Omega)$, thus, by \eqref{eq: 6},
$$
\aligned
\int_{\Omega}\widetilde{w}\varphi
& =\lim_{k\rightarrow\infty}\int_{\Omega}U^{1/2}_{n_{k}}u_{n_{k}}\varphi\\
& =\lim_{k\rightarrow\infty}\int_{\Omega}u_{n_{k}}U^{1/2}\varphi
  +\lim_{k\rightarrow\infty}\int_{\Omega}u_{n_{k}}\left(U^{1/2}_{n_{k}}-U^{1/2}\right)\varphi\\
& =\int_{\Omega}uU^{1/2}\varphi
  +\lim_{k\rightarrow\infty}\int_{\Omega}u_{n_{k}}\left(U^{1/2}_{n_{k}}-U^{1/2}\right)\varphi.
\endaligned
$$
Now,
$$
\mod{U^{1/2}_{n_{k}}-U^{1/2}}^{2}|\varphi|^{2}\leq U|\varphi|^{2}\in L^{1}(\Omega) 
$$
and $U_{n_{k}}\rightarrow U$ pointwise. Hence, by the dominated convergence theorem and \eqref{eq: 3},
$$
\lim_{k\rightarrow\infty}\int_{\Omega}\left|u_{n_{k}}\left(U^{1/2}_{n_{k}}-U^{1/2}\right)\varphi\right|
\leq 
\lim_{k\rightarrow\infty}s^{-1}\left(\int_{\Omega}\mod{U^{1/2}_{n_{k}}-U^{1/2}}^{2}|\varphi|^{2}\right)^{1/2}=0.
$$
It follows that
$$
\int_{\Omega}\widetilde{w}\varphi=\int_{\Omega}U^{1/2}u\varphi,\quad \forall \varphi\in C^{\infty}_{c}(\Omega),
$$
so $\widetilde{w}=U^{1/2}u$. 

We proved that in $L^2$ we have
\begin{equation}\label{eq: 9}
u_{n}\rightarrow u,\quad \nabla u_{n_{k}}\rightharpoonup \nabla u\quad \text{and}\quad U^{1/2}_{n_{k}}u_{n_{k}}\rightharpoonup U^{1/2}u.
\end{equation}
We now show that the last two convergences in \eqref{eq: 9} are in the normed topology of $L^{2}(\Omega)$.

By ellipticity,
$$
\aligned
J_{n_{k}}& := s\|u_{n_{k}}-u\|^{2}_{2}+\lambda\|\nabla u_{n_{k}}-\nabla u\|^{2}_{2}+\|U^{1/2}_{n_{k}}u_{n_{k}}-U^{1/2}u\|^{2}_{2}\\
&\leq s\|u_{n_{k}}\|^{2}_{2}+s\|u\|^{2}_{2}-2s\Re\int_{\Omega}u_{n_{k}}\bar u+\|U^{1/2}_{n_{k}}u_{n_{k}}\|^{2}_{2}+\|U^{1/2}u\|^{2}_{2}\\
&\hskip 12pt-2\Re\int_{\Omega}U^{1/2}_{n_{k}}U^{1/2}u_{n_{k}}\bar u+\Re\int_{\Omega}\sk{A(\nabla u_{n_{k}}-\nabla u)}{(\nabla u_{n_{k}}-\nabla u)}\\
&={I^{0}}+{I^{1}_{n_{k}}}+{I^{2}_{n_{k}}}+{I^{3}_{n_{k}}},
\endaligned
$$
where
$$
\aligned
&{I^{0}}=s\|u\|^{2}_{2}+\Re\int_{\Omega}\sk{A\nabla u}{\nabla u}+\|U^{1/2}u\|^{2}_{2},\\
&{I^{1}_{n_{k}}}=s\|u_{n_{k}}\|^{2}_{2}+\Re\int_{\Omega}\sk{A\nabla u_{n_{k}}}{\nabla u_{n_{k}}}+\|U^{1/2}_{n_{k}}u_{n_{k}}\|^{2}_{2},\\
&{I^{2}_{n_{k}}}=-2s\Re\int_{\Omega}u_{n_{k}}\bar u-2\Re\int_{\Omega}U^{1/2}_{n_{k}}U^{1/2}u_{n_{k}}\bar u,\\
&{I^{3}_{n_{k}}}=-\Re\left(\int_{\Omega}\sk{A\nabla u_{n_{k}}}{\nabla u}+\int_{\Omega}\sk{A\nabla u}{\nabla u_{n_{k}}}\right).
\endaligned
$$
Sending $k\rightarrow\infty$, we obtain
\begin{itemize}
\item
${\displaystyle 
{I^{0}}=\Re\int_{\Omega}\left(\left(s+L_{U}\right)u\right)\bar u=\Re\int_{\Omega}f\bar u,}$
because $u\in\cD(L_{U})$;
\item
${\displaystyle 
{I^{1}_{n_{k}}}=\Re\int_{\Omega}\left(\left(s+L_{U_{n_{k}}}\right)u_{n_{k}}\right)\bar u_{n_{k}}=\Re\int_{\Omega}f\bar u_{n_{k}}\rightarrow\Re\int_{\Omega}f\bar u,}$

because 
$u_{n_{k}}\rightarrow u$ in $L^{2}(\Omega)$;
\item
${\displaystyle 
{I^{2}_{n_{k}}}=-2\Re\left(s\int_{\Omega}u_{n_{k}}\bar u+\int_{\Omega}\left(U^{1/2}_{n_{k}}u_{n_{k}}\right)\left(U^{1/2}\bar u\right)\right)
\rightarrow-2s\|u\|^{2}_{2}-2\|U^{1/2}u\|^{2}_{2},}$

by \eqref{eq: 9}, since $u\in\cD(\a_{U})$ implies $U^{1/2}\bar u\in L^2(\Omega)$; and finally
\item
${\displaystyle 
{I^{3}_{n_{k}}}\rightarrow-2\Re\int_{\Omega}\sk{A\nabla u}{\nabla u}
}$

by \eqref{eq: 9} again, since $A\in\cA(\Omega)$ implies $|A\nabla u|,|A^*\nabla u|\leqsim |\nabla u|\in L^2(\Omega)$.
\end{itemize}
Therefore, using that  $u\in\cD(L_{U})$, we obtain, as $k\rightarrow\infty$,
$$
\aligned
I^{0}+I^{1}_{n_{k}}   
& \rightarrow 2\Re\int_{\Omega}f\bar u\\
{I^{2}_{n_{k}}}+{I^{3}_{n_{k}}}
&\rightarrow-2\Re\int_{\Omega}\left(\left(s+L_{U}\right)u\right)\bar u=-2\Re\int_{\Omega}f\bar u.
\endaligned
$$
It follows that $J_{n_{k}}\rightarrow 0$ as $k\rightarrow\infty$, so
\begin{equation}\label{eq: 10}
\nabla u_{n_{k}}\rightarrow \nabla u\quad \text{and}\quad U^{1/2}_{n_{k}}u_{n_{k}}\rightarrow U^{1/2}u
\hskip 30pt
\text{in }L^2(\Omega), 
\end{equation}
as desired.

By repeating verbatim the argument following  \eqref{eq: 9}, we may prove that every {\it subsequence} of $ (u_{n})_{n}$ has its own subsequence for which \eqref{eq: 10} holds.
Therefore, by a standard convergence argument involving subsequences, \eqref{eq: a1a} and \eqref{eq: a2a} hold for all $\zeta=-s$, $s>0$. (Recall that for \eqref{eq: a11a} we already know that, by virtue of \eqref{eq: 6}.)
\smallskip

It remains to prove \eqref{eq: a11a}, \eqref{eq: a1a} and \eqref{eq: a2a} for all $\zeta\in({\mathbb C}\setminus\overline\bS_{\nu })\setminus(-\infty,0)$. 
Fix $f\in L^{2}(\Omega)$ and for each $n\in\N\cup\{\infty\}$ consider the function 
$$
G_{n}:{\mathbb C}\setminus\overline\bS_{\nu }\longrightarrow L^{2}(\Omega)\times L^{2}(\Omega)\times L^{2}(\Omega;{\mathbb C}^{d})=:H
$$ 
given by
$$
G_{n}(\zeta)
=\left(u_n(\zeta),U^{1/2}_{n}u_n(\zeta),\nabla u_n(\zeta)\right).
$$ 
The functions $G_{n}$ are holomorphic, 
because the complex-valued function $\sk{G_{n}(\cdot)}{\bf g}_{H}$ is holomorphic for all ${\bf g}$ in the norming subspace $L^{2}(\Omega)\times C^{\infty}_{c}(\Omega;{\mathbb C}^{d})\times C^{\infty}_{c}(\Omega)$ of $H$, cf. \cite[Proposition A.3]{ABHbook}.
Owing to \eqref{eq: 3bis}, the family $\{G_{n}: n\in\N_{+}\}$ is locally uniformly bounded in ${\mathbb C}\setminus\overline\bS_{\nu }$. 
Therefore, since we have already proved that $G_{n}(-s)\rightarrow G_{\infty}(-s)$ in $H$ for all $s>0$, it follows from Vitali's theorem \cite[Theorem~A.5]{ABHbook} that the convergence holds true for all $\zeta\in{\mathbb C}\setminus\overline\bS_{\nu }$.
\end{proof}

\subsubsection{Proof of Theorem \ref{t: 1a}}
\label{s: Teske boje}
We use the
standard representation of 
the analytic semigroup $T^{U_{n}}_{t}$, $n\in\N\cup\{\infty\}$, by means of a Cauchy integral (see, for example, \cite[Chapter~II]{EN} or \cite[Lemma 2.3.2]{Haase}). We used it earlier in the proof of \cite[Lemma A.4]{CD-DivForm}.
Fix $\delta>0$, $\theta>\nu(A)$ and denote by $\gamma$ the positively oriented boundary of $\bS_\theta\cup\mn{\zeta\in{\mathbb C}}{|\zeta|<\delta}$. Then 
\begin{equation*}
\aligned
\label{eq: Vrijeme je da se krene}
\nor{\nabla T^{U_{n}}_{t}f - \nabla T^{U}_{t}f}_2
& \leqsim
\int_\gamma
e^{-t\,\Re\zeta}
\nor{
\nabla\left(\zeta-L_{U_{n}}\right)^{-1}f 
-\nabla\left(\zeta-L_{U}\right)^{-1}f 
}_2
\,|d\zeta|,
\\
\nor{U_n^{1/2}T^{U_{n}}_{t}f - U^{1/2}T^{U}_{t}f}_2
& \leqsim
\int_\gamma
e^{-t\,\Re\zeta}
\nor{
U_n^{1/2}\left(\zeta-L_{U_{n}}\right)^{-1}f 
-
U^{1/2}\left(\zeta-L_{U}\right)^{-1}f 
}_2
\,|d\zeta|.
\endaligned
\end{equation*}
By Proposition~\ref{p: 1a}, 
the integrands converge to zero as $n\rightarrow\infty$. 
An examination of the constant $C_{\lambda,\nu}(\zeta)$ from \eqref{eq: 3bis} shows that for $\zeta$ along the curve $\gamma$ we have $C_{\lambda,\nu}(\zeta)\leqsim 1$ uniformly in $n\in\N\cup\{\infty\}$ and $\zeta\in\gamma$.
This means that we can apply the dominated convergence theorem in the two integrals above and complete the proof.
\qed

\section*{Appendix: Invariance of form-domains under normal contractions}

\renewcommand{\thesection}{\Alph{section}}
\setcounter{section}{1}
\setcounter{thm}{0}
Following \cite[Section 2.4]{O}, we say that a function $p:{\mathbb C}\rightarrow{\mathbb C}$ is a {\it normal contraction} if it is Lipschitz on ${\mathbb C}$ with constant one and $p(0)=0$. Denote by $\oN$ the set of all normal contractions. Define $T:{\mathbb C}\rightarrow{\mathbb C}$ by 
$
 T(\zeta)=(\Re\zeta)^+, 
$
 where $x^+=\max\{x,0\}$. 
Recall that we defined $P:{\mathbb C}\rightarrow{\mathbb C}$ in \eqref{eq: P}. 
Functions $P,T$ belong to the class $\oN$. Moreover, they are in a particular sense fundamental representatives of this class, as we show next. 

\begin{prop}
Let $\oV$ be a closed subspace of $H^{1}(\Omega)$ containing $H^{1}_{0}(\Omega)$. Then $\oV$ is invariant under $P$ and $T$ if and only if it is invariant under the (whole) class $\oN$.
\end{prop}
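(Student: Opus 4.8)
The plan is to prove the two implications separately, the forward one being trivial (since $P,T\in\oN$) and the backward one requiring the actual work.

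\medskip
\emph{The easy direction.} If $\oV$ is invariant under the whole class $\oN$, then since $P,T\in\oN$ it is in particular invariant under $P$ and $T$. Nothing more is needed here.

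\medskip
\emph{The substantial direction.} Suppose $\oV$ is invariant under both $P$ and $T$; we must show $p(u)\in\oV$ for every $u\in\oV$ and every normal contraction $p$. The strategy is to reduce a general normal contraction to the two model maps $P$ and $T$ by means of the following standard moves, all of which are known (see Ouhabaz \cite[Section 2.4]{O}) to preserve $\oV$ once $P$ and $T$ do:
first, invariance under $T(\zeta)=(\Re\zeta)^+$ together with translations and rotations $\zeta\mapsto e^{i\theta}\zeta+c$ (which clearly preserve $\oV$, modulo the caveat below) yields invariance under every map of the form $\zeta\mapsto \max\{\Re(e^{-i\theta}\zeta),a\}$ and $\zeta\mapsto\min\{\Re(e^{-i\theta}\zeta),a\}$, i.e.\ under truncations of real parts along arbitrary directions; second, taking real and imaginary parts and recombining shows invariance under $\zeta\mapsto(\Re\zeta,\Im\zeta)\mapsto$ coordinatewise truncations, hence under the map onto any closed axis-parallel rectangle, and then (by intersecting half-planes and passing to limits in $W^{1,2}$, using that $\oV$ is closed) under the nearest-point projection onto \emph{any} closed convex subset of $\C$; third, $P$ itself is the nearest-point projection onto the closed unit disc. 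The key algebraic fact is that an arbitrary normal contraction $p$ can be recovered from these projections: writing $p=\lim_n p_n$ where each $p_n$ is built as a finite combination (via $\min$/$\max$ along finitely many directions, i.e.\ projections onto finite intersections of half-planes) approximating $p$ uniformly on compacta with Lipschitz constant $\le 1$, one gets $p_n(u)\to p(u)$ in $W^{1,2}(\Omega)$ for $u\in\oV$, and closedness of $\oV$ finishes. Alternatively, and more cleanly, one notes that every normal contraction $p$ with $p(0)=0$ is a uniform-on-compacta limit of finite suprema/infima of functions of the form $\zeta\mapsto \Re(e^{i\theta}\zeta)+c$ restricted appropriately, and invariance under such lattice operations follows from invariance under $T$.

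\medskip
The step I expect to be the main obstacle is the \emph{closedness/limiting argument}: one must check that the approximating sequence $p_n(u)$ is not merely bounded but actually norm-convergent in $W^{1,2}(\Omega)$ (equivalently, Cauchy), which requires controlling $\nabla p_n(u)$ — here one uses that a normal contraction is differentiable a.e.\ with $|\nabla(p(u))|\le|\nabla u|$ a.e.\ (chain rule for Lipschitz functions composed with Sobolev functions, as in \cite[Section 2.4]{O}), so that $\{p_n(u)\}$ lies in a fixed ball of $W^{1,2}(\Omega)$ and $p_n(u)\to p(u)$ in $L^2$; a weak-compactness argument then identifies the weak $W^{1,2}$-limit as $p(u)$, and since $\oV$ is weakly closed (being a closed \emph{subspace}), $p(u)\in\oV$. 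A secondary point needing care is that translations $\zeta\mapsto\zeta+c$ do \emph{not} in general preserve $\oV$ (constants need not lie in $\oV$ unless $\oV=W^{1,2}(\Omega)$), so the reduction must be phrased using only differences, i.e.\ genuinely Lipschitz maps fixing $0$; this is exactly why the hypothesis is invariance under $P$ and $T$ rather than under a single projection, and why the lattice operations $\min,\max$ (which commute with adding a constant to $u$ only up to that same constant) must be combined so that the net effect vanishes at $0$.
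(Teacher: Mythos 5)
Your forward direction is correct and coincides with the paper's. For the backward direction, however, you take a genuinely different route from the paper, and that route has a real gap.

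The paper's proof is short and indirect: it takes $\gotb(u,v)=\int_\Omega\sk{\nabla u}{\nabla v}$ with $\Dom(\gotb)=\oV$, invokes Ouhabaz's \cite[Theorem 4.31, parts 1) and 2)]{O} to conclude from $P,T$-invariance that the associated semigroup $e^{t\Delta}$ is sub-Markovian, and then applies the characterization \cite[Theorem 2.25]{O} (sub-Markovian $\Leftrightarrow$ form domain invariant under normal contractions) to conclude $\oV$ is $\oN$-invariant. You instead attempt a direct, by-hand approximation of a general normal contraction by lattice combinations built from $T$, $P$, rotations and translations. The problem is precisely the one you flag at the end but do not resolve: translations $\zeta\mapsto\zeta+c$ do \emph{not} preserve $\oV$ in general (constants need not lie in $\oV$; think of $W^{1,2}_0(\Omega)$ on a bounded domain), so the truncations at nonzero levels you need — e.g.\ $\zeta\mapsto\max\{\Re\zeta,a\}$ or the nearest-point projection onto a generic closed convex set not containing $0$ — cannot be obtained from $T$ and $P$ by composing with maps that preserve $\oV$. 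Saying that the net effect of the lattice operations ``must vanish at $0$'' is not a proof: the individual pieces in your sup/inf construction are affine maps with nonzero constant terms, and you never show that the intermediate building blocks send $\oV$ into $\oV$, nor that they can be reorganized so that each stage of the approximation is itself a normal contraction applied to an element of $\oV$. Moreover, the claim that every complex-valued normal contraction is a locally uniform limit of finite suprema/infima of real-part projections is, as stated, only a plausibility argument for the real-valued case; you would separately have to treat real and imaginary parts and control the joint Lipschitz constant. The weak-compactness/closed-subspace step at the end is fine, but it is downstream of an approximation scheme that was never shown to stay inside $\oV$.

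In short: the paper avoids exactly this delicate reduction by routing through the sub-Markovian/Dirichlet-form characterization, where the translation issue has already been dealt with once and for all in Ouhabaz's framework. If you want to keep your direct approach, you must either (i) justify why truncations at nonzero levels preserve $\oV$ under the stated hypotheses, which is not obvious and is essentially the content of the cited theorems, or (ii) replace the translation-dependent pieces with genuinely $0$-fixing normal contractions (scalings $aP(\cdot/a)$ are fine, as $\oV$ is a subspace; level-shifted real truncations are not) and show that these already generate $\oN$ in the relevant limiting sense. As written, the argument does not close.
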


\begin{proof}
Suppose that $\oV$ is invariant under $P$ and $T$. Let $\Delta$ be the euclidean Laplacian on $\Omega$ subject to the boundary conditions embodied in $\oV$. That is, $-\Delta$ is the operator arising from the form $\gotb$, defined by $\cD(\gotb)=\oV$ and 
\begin{equation}
\label{eq: kocka}
\gotb(u,v)=\int_\Omega\sk{\nabla u}{\nabla v}_{{\mathbb C}^{d}}.
\hskip 30pt
\forall\, u,v\in\oV.
\end{equation}
Parts {\it 1)} and {\it 2)} of \cite[Theorem 4.31]{O} imply that $\left(e^{-t(-\Delta)}\right)_{t>0}$ is sub-Markovian. Now \cite[Theorem 2.25]{O} implies that $\oV$ is invariant under $\oN$.

The implication in the opposite direction is obvious, as $P,T\in\oN$.
\end{proof}

\begin{prop}
When $\oV$ is any of the special cases (a)-(c) from Section \ref{s: Karlstejn}, then $\oV$ is invariant under $\oN$.
\end{prop}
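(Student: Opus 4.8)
The plan is to reduce the statement, via the preceding Proposition, to invariance under the two model contractions $P$ and $T$, and then to verify the latter in each of the cases (a)--(c). Invariance under $P$ comes for free: it is exactly the standing hypothesis \eqref{eq: invariance}, which was recalled in Section~\ref{s: Karlstejn} to hold in all three cases by Ouhabaz \cite[Proposition 4.11]{O}. Hence everything reduces to showing that $\oV$ is invariant under $T(\zeta)=(\Re\zeta)^+$.

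Cases (a) and (b) I would dispatch directly. For (a), the space $W^{1,2}_{0}(\Omega)$ is invariant under every normal contraction by \cite[Proposition 4.4]{O}. For (b), if $u\in W^{1,2}(\Omega)$ and $F\in\oN$, then the chain rule for the composition of a Lipschitz map with a Sobolev function gives $F(u)\in W^{1,2}(\Omega)$ with $|F(u)|\leq|u|$ and $|\nabla F(u)|\leq|\nabla u|$ a.e.; in particular $T$ maps $W^{1,2}(\Omega)$ into itself. (Both remarks in fact give invariance of $\oV$ under the whole class $\oN$, so in these two cases one need not even invoke the preceding Proposition.)

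The real content is case (c), where $\oV$ is the closure in $W^{1,2}(\Omega)$ of $\cD:=\mn{v|_\Omega}{v\in C_c^\infty(\R^{d}\setminus\Gamma)}$. I would argue in two steps, noting that the argument uses only that the map in question is a normal contraction, so it in fact establishes invariance under all of $\oN$. \emph{Step 1: generators are mapped into $\oV$.} For $v\in C_c^\infty(\R^{d}\setminus\Gamma)$ and $F\in\oN$, the function $F\circ v$ is Lipschitz on $\R^{d}$, and since $F(0)=0$ its support is contained in $\operatorname{supp}(v)$, hence in a compact subset of $\R^{d}\setminus\Gamma$; thus $F\circ v\in W^{1,2}(\R^{d})$. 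Mollifying $F\circ v$ with kernels of support radius smaller than $\operatorname{dist}(\operatorname{supp}(v),\Gamma)$ gives functions in $C_c^\infty(\R^{d}\setminus\Gamma)$ that converge to $F\circ v$ in $W^{1,2}(\R^{d})$; restricting to $\Omega$ shows $F(v|_\Omega)=(F\circ v)|_\Omega\in\oV$. \emph{Step 2: passage to the closure.} For arbitrary $u\in\oV$ choose $u_k=v_k|_\Omega\in\cD$ with $u_k\to u$ in $W^{1,2}(\Omega)$. By Step 1, $F(u_k)\in\oV$; moreover $|F(u_k)|\leq|u_k|$ and $|\nabla F(u_k)|\leq|\nabla u_k|$ a.e., so $(F(u_k))_k$ is bounded in $W^{1,2}(\Omega)$ and, along a subsequence, $F(u_{k_j})\rightharpoonup z$ weakly in $W^{1,2}(\Omega)$, with $z\in\oV$ because a norm-closed subspace is weakly closed. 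Passing to a further subsequence, $u_{k_j}\to u$ a.e., so by continuity of $F$ one has $F(u_{k_j})\to F(u)$ a.e.; since the sequence is also bounded in $L^2(\Omega)$, it converges weakly in $L^2$ to $F(u)$, whence $z=F(u)$. Therefore $F(u)=z\in\oV$.

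Combining the three cases gives invariance of $\oV$ under $\oN$, as claimed. I expect the only genuine delicacies to lie in case (c): controlling the supports of the mollifications in Step 1 --- which is precisely where $F(0)=0$ matters, a normal contraction being unable to enlarge a support --- and, in Step 2, identifying the weak $W^{1,2}$-limit $z$ with $F(u)$, for which one passes through an a.e.\ convergent subsequence of $(u_k)$. Everything else is routine Sobolev-space bookkeeping.
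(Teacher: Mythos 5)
Your proposal is correct, but it takes a genuinely different route from the paper's. The paper goes top--down: it invokes Ouhabaz's abstract characterization \cite[Theorem 2.25]{O}, which says that the form domain of a densely defined, symmetric, accretive, closed form is invariant under all normal contractions if and only if the associated semigroup is sub-Markovian; it then takes the form to be the pure Dirichlet form \eqref{eq: kocka} on $\oV$ (so $\oL = -\Delta$ with boundary conditions built into $\oV$), and observes that positivity and $L^\infty$-contractivity of $e^{t\Delta_{\oV}}$ in cases (a)--(c) are already recorded in \cite[Corollaries 4.3 and 4.10]{O}. You instead prove the invariance directly and by hand, case by case. For (a) and (b) you cite (resp.\ rederive) the Lipschitz chain rule $|F(u)|\leq|u|$, $|\nabla F(u)|\leq|\nabla u|$, and for (c) you run a two-step mollification/weak-compactness argument on the generating set $C^\infty_c(\R^d\setminus\Gamma)|_\Omega$. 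Your approach is more elementary and self-contained; the paper's is shorter and delegates all the analysis to black-box theorems. Note also that your opening ``reduce to $P$ and $T$ via the preceding Proposition'' is a red herring in your own writeup, since you then establish invariance under every $F\in\oN$ directly, as you yourself observe at the end of (b).

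Two small points in your Step 2 of case (c). First, the identification $z = F(u)$ is cleanest if you argue that $F(u_k)\to F(u)$ \emph{strongly} in $L^2(\Omega)$: since $F$ is $1$-Lipschitz, $\|F(u_k)-F(u)\|_2 \leq \|u_k-u\|_2 \to 0$, and a strong $L^2$-limit must agree with the weak $W^{1,2}$-limit (restricted to $L^2$). Your route via a.e.\ convergence plus $L^2$-boundedness does also give $z=F(u)$, but only after an auxiliary Mazur-type argument to pin down the weak limit, so the strong-$L^2$ shortcut is preferable. Second, ``norm-closed subspace is weakly closed'' is exactly right and worth stating, since it is the only place the closedness of $\oV$ enters. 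With those touch-ups the proof is complete and valid for all of $\oN$ at once.
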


\begin{proof}
By \cite[Theorem 2.25]{O}, it suffices to find a sesqulinear form $\gotb$ such that:
\begin{itemize}
\item
$\cD(\gotb)=\oV$
\item
$\gotb$ is symmetric, accretive and closed on $L^2(\Omega)$;
\item
if $\oL$ is the operator associated with $\gotb$ then the semigroup $\exp(-t\oL)$ is sub-Markovian.
\end{itemize}
We define $\gotb$ on $\oV$ by \eqref{eq: kocka}. Thus $\oL=-\Delta$ with boundary conditions embodied in $\oV$. 
Then $\cD(\gotb)=\oV$ by construction, the form is clearly symmetric and accretive. It is closed by Theorem~\ref{t: majke cigan}. In order to check that the semigroup is sub-Markovian, we have to check, cf. \cite[Definition 2.12]{O}, that it is positive and contractive on $L^\infty$. Now, these properties are proven in \cite[Corollary 4.3]{O} and \cite[Corollary 4.10]{O}, respectively.
\end{proof}

\section*{Acknowledgements}

A. Carbonaro was partially supported by the ``National Group for Mathematical Analysis, Probability and their Applications'' (GNAMPA-INdAM).

O. Dragi\v{c}evi\'c was partially supported by the Slovenian Research Agency, ARRS (research grant J1-1690 and research program P1-0291).

\bibliography{biblio_Potentials}{}
    \bibliographystyle{alpha}

\end{document}